\newcommand{\globalcolor}[1]{%
  \color{#1}\global\let\default@color\current@color
}
\definecolor{blush}{rgb}{0.87, 0.36, 0.51}
	\definecolor{brightcerulean}{rgb}{0.11, 0.67, 0.84}
	\definecolor{greenryb}{rgb}{0.4, 0.69, 0.2}
\newif\ifdark
\definecolor{darkred}{rgb}{0.9,0.2,0.2}
\definecolor{darkblue}{rgb}{0.7,0.3,1}
\definecolor{darkgreen}{rgb}{0.1,0.9,0.1}
\definecolor{franck}{rgb}{0,0.8,1}
\definecolor{pagebackground}{rgb}{.15,.21,.18}
\definecolor{pageforeground}{rgb}{.84,.84,.85}
\definecolor{symbols}{rgb}{0,0.7,1}
\colorlet{connection}{red!80!black}
\colorlet{boxcolor}{blue!50}
\definecolor{darkred}{rgb}{0.7,0.1,0.1}
\definecolor{darkblue}{rgb}{0.4,0.1,0.8}
\definecolor{darkgreen}{rgb}{0.1,0.7,0.1}
\definecolor{franck}{rgb}{0,0,1}
\definecolor{pagebackground}{rgb}{1,1,1}
\definecolor{pageforeground}{rgb}{0,0,0}
\colorlet{symbols}{blue!90!black}
\colorlet{connection}{red!30!black}
\colorlet{boxcolor}{blue!50!black}
\def\slash{\leavevmode\unskip\kern0.18em/\penalty\exhyphenpenalty\kern0.18em}
\def\dash{\leavevmode\unskip\kern0.18em--\penalty\exhyphenpenalty\kern0.18em}
\DeclareMathAlphabet{\mathbbm}{U}{bbm}{m}{n}
\DeclareFontFamily{U}{BOONDOX-calo}{\skewchar\font=45 }
\DeclareFontShape{U}{BOONDOX-calo}{m}{n}{
  <-> s*[1.05] BOONDOX-r-calo}{}
\DeclareFontShape{U}{BOONDOX-calo}{b}{n}{
  <-> s*[1.05] BOONDOX-b-calo}{}
\DeclareMathAlphabet{\mcb}{U}{BOONDOX-calo}{m}{n}
\SetMathAlphabet{\mcb}{bold}{U}{BOONDOX-calo}{b}{n}
\setlist{noitemsep,topsep=4pt,leftmargin=1.5em}
\DeclareMathAlphabet{\mathbbm}{U}{bbm}{m}{n}
\DeclareMathAlphabet{\mcb}{U}{BOONDOX-calo}{m}{n}
\SetMathAlphabet{\mcb}{bold}{U}{BOONDOX-calo}{b}{n}
\DeclareFontFamily{U}{mathx}{\hyphenchar\font45}
\DeclareFontShape{U}{mathx}{m}{n}{
      <5> <6> <7> <8> <9> <10>
      <10.95> <12> <14.4> <17.28> <20.74> <24.88>
      mathx10
      }{}
\DeclareSymbolFont{mathx}{U}{mathx}{m}{n}
\DeclareMathSymbol{\bigtimes}{1}{mathx}{"91}
\def\s{\mathfrak{s}}
\providecommand{\figures}{false}
{ \ifthenelse{\equal{\figures}{false}} {#1}{\[ {\rm Figure \ missing !} \]} }{}
\DeclareMathOperator{\CDi}{\mathsf{CD}}
\DeclareMathOperator{\PL}{\mathsf{PreLie}}
\DeclareMathOperator{\Nov}{\mathsf{Nov}}
\DeclareMathOperator{\Mag}{\mathsf{Mag}}
\newcommand{\bn}{{\mathbf n}}
\newcommand{\cA}{{\mathcal A}}
\newcommand{\cP}{{\mathcal P}}
\def\CP{\mathcal{P}}
\def\CT{\mathcal{T}}
\tikzstyle{tinydots}=[dash pattern=on \pgflinewidth off \pgflinewidth]
\tikzstyle{superdense}=[dash pattern=on 4pt off 1pt]
\newcommand{\RT}{\mathcal{T}}
\newcommand{\mcR}{\mathcal{R}}
\newcommand{\mcL}{\mathcal{L}}
\newcommand{\length}[1]{\mathcal{l}({#1})}
\newcommand{\coord}{{\mcR}}
\newcommand{\beq}{\begin{equation}}
\newcommand{\eeq}{\end{equation}}
\newcommand{\mfL}{\mathfrak{L}}
\newcommand{\mfl}{\mathfrak{l}}
\def\Lab{\mathfrak{L}}
\def\${|\!|\!|}
\newenvironment{DIFnomarkup}{}{} 
\newcommand{\I}{{\mathcal I}}
\newcommand{\rrightarrow}{{\to\hskip -4.9mm\raise 1pt\hbox{$\to$}}}
\newfont{\indic}{bbmss12}
\def\Nabla_#1{\nabla_{\!#1}}
    \pgfmathsetlength{\pgf@xb}{\pgfkeysvalueof{/pgf/outer xsep}}%
    \pgfmathsetlength{\pgf@yb}{\pgfkeysvalueof{/pgf/outer ysep}}%
\def\symbol#1{\textcolor{symbols}{#1}}
\def\decorate#1#2{
        \ifnum#2>0
    		\foreach \count in {1,...,#2}{
	       	let
				\p1 = (sourcenode.center),
                \p2 = (sourcenode.east),
				\n1 = {\x2-\x1},
				\n2 = {1mm},
				\n3 = {(1.3+0.6*(\count-1))*\n1},
				\n4 = {0.7*\n1}
			in 
        		node[rectangle,fill=symbols,rotate=30,inner sep=0pt,minimum width=0.2*\n2,minimum height=\n2] at ($(sourcenode.center) + (\n3,\n4)$) {}
				}
		\fi
        \ifnum#1>0
    		\foreach \count in {1,...,#1}{
	       	let
				\p1 = (sourcenode.center),
                \p2 = (sourcenode.east),
				\n1 = {\x2-\x1},
				\n2 = {1mm},
				\n3 = {(1.3+0.6*(\count-1))*\n1},
				\n4 = {0.7*\n1}
			in 
        		node[rectangle,fill=symbols,rotate=-30,inner sep=0pt,minimum width=0.2*\n2,minimum height=\n2] at ($(sourcenode.center) + (-\n3,\n4)$) {}
				}
		\fi
}
\tikzset{
    dectriangle/.style 2 args={
        triangle,
        alias=sourcenode,
        append after command={\decorate{#1}{#2}}
    },
    dectriangle/.default={0}{0},
}
\tikzset{
	cross/.style={path picture={ 
  		\draw[symbols]
			(path picture bounding box.south east) -- (path picture bounding box.north west) (path picture bounding box.south west) -- (path picture bounding box.north east);
		}},
root/.style={circle,fill=green!50!black,inner sep=0pt, minimum size=1.2mm},
        dot/.style={circle,fill=pageforeground,inner sep=0pt, minimum size=1mm},
        dotred/.style={circle,fill=pageforeground!50!pagebackground,inner sep=0pt, minimum size=2mm},
        var/.style={circle,fill=pageforeground!10!pagebackground,draw=pageforeground,inner sep=0pt, minimum size=3mm},
        kernel/.style={semithick,shorten >=2pt,shorten <=2pt},
        kernels/.style={snake=zigzag,shorten >=2pt,shorten <=2pt,segment amplitude=1pt,segment length=4pt,line before snake=2pt,line after snake=5pt,},
        rho/.style={densely dashed,semithick,shorten >=2pt,shorten <=2pt},
           testfcn/.style={dotted,semithick,shorten >=2pt,shorten <=2pt},
        renorm/.style={shape=circle,fill=pagebackground,inner sep=1pt},
        labl/.style={shape=rectangle,fill=pagebackground,inner sep=1pt},
        xic/.style={very thin,circle,draw=symbols,fill=symbols,inner sep=0pt,minimum size=1.2mm},
        g/.style={very thin,rectangle,draw=symbols,fill=symbols!10!pagebackground,inner sep=0pt,minimum width=2.5mm,minimum height=1.2mm},
        xi/.style={very thin,circle,draw=symbols,fill=symbols!10!pagebackground,inner sep=0pt,minimum size=1.2mm},
	xies/.style={very thin,rectangle,fill=green!50!black!25,draw=symbols,inner sep=0pt,minimum size=1.1mm},
	xiesf/.style={very thin,rectangle,fill=green!50!black,draw=symbols,inner sep=0pt,minimum size=1.1mm},
        xix/.style={very thin,crosscircle,fill=symbols!10!pagebackground,draw=symbols,inner sep=0pt,minimum size=1.2mm},
        X/.style={very thin,cross,rectangle,fill=pagebackground,draw=symbols,inner sep=0pt,minimum size=1.2mm},
	xib/.style={thin,circle,fill=symbols!10!pagebackground,draw=symbols,inner sep=0pt,minimum size=1.6mm},
	xie/.style={thin,circle,fill=green!50!black,draw=symbols,inner sep=0pt,minimum size=1.6mm},
	xid/.style={thin,circle,fill=symbols,draw=symbols,inner sep=0pt,minimum size=1.6mm},
	xibx/.style={thin,crosscircle,fill=symbols!10!pagebackground,draw=symbols,inner sep=0pt,minimum size=1.6mm},
	kernels2/.style={very thick,draw=connection,segment length=12pt},
	keps/.style={thin,draw=symbols,->},
	kepspr/.style={thick,draw=connection,->},
	krho/.style={thin,draw=symbols,superdense,->},
	krhopr/.style={thick,draw=connection,superdense},
	triangle/.style = { regular polygon, regular polygon sides=3},
	not/.style={thin,circle,draw=connection,fill=connection,inner sep=0pt,minimum size=0.5mm},
	diff/.style = {very thin,draw=symbols,triangle,fill=red!50!black,inner sep=0pt,minimum size=1.6mm},
	diff1/.style = {very thin,dectriangle={1}{0},fill=red!50!black,draw=symbols,inner sep=0pt,minimum size=1.6mm},
	diff2/.style = {very thin,dectriangle={1}{1},fill=red!50!black,draw=symbols,inner sep=0pt,minimum size=1.6mm},
		diffmini/.style = {very thin,rectangle,fill=black,draw=black,inner sep=0pt,minimum size=0.75mm},
	 kernelsmod/.style={very thick,draw=connection,segment length=12pt},
	 rec/.style = {very thin,rectangle,fill=black,draw=black,inner sep=0pt,minimum size=2mm},
	cerc/.style={very thin,circle,draw=black,fill=symbols,inner sep=0pt,minimum size=2mm},
	stars/.style={very thin,star,star points=6,star point ratio=0.5, draw=black,fill=red,inner sep=0pt,minimum size=0.7mm},
	>=stealth,
        }
        \tikzset{
root/.style={circle,fill=black!50,inner sep=0pt, minimum size=3mm},
        circ/.style={circle,fill=white,draw=black,very thin,inner sep=.5pt, minimum size=1.2mm},
        round1/.style={fill=white,outer sep = 0,inner sep=2pt,rounded corners=1mm,draw,text=black,thin,minimum size=1.2mm},
          circ1/.style={circle,fill=red!10,draw=red,very thin,inner sep=.5pt, minimum size=1.2mm},
        rect/.style={fill=white,outer sep = 0,inner sep=2pt,rectangle,draw,text=black,thin,minimum size=1.2mm},
        rect1/.style={fill=white,outer sep = 0,inner sep=2pt,rectangle,draw,text=black,thin,minimum size=1.2mm},
        round2/.style={fill=red!10,outer sep = 0,inner sep=2pt,rounded corners=1mm,draw,text=black,thin,minimum size=1.2mm},
       round3/.style={fill=blue!10,outer sep = 0,inner sep=2pt,rounded corners=1mm,draw,text=black,thin,minimum size=1.2mm}, 
        rect2/.style={fill=black!10,outer sep = 0,inner sep=2pt,rectangle,draw,text=black,thin,minimum size=1.2mm},
        dot/.style={circle,fill=black,inner sep=0pt, minimum size=1.2mm},
        dotred/.style={circle,fill=black!50,inner sep=0pt, minimum size=2mm},
        var/.style={circle,fill=black!10,draw=black,inner sep=0pt, minimum size=3mm},
        kernel/.style={semithick,shorten >=2pt,shorten <=2pt},
         diag/.style={thin,shorten >=4pt,shorten <=4pt},
        kernel1/.style={thick},
        kernels/.style={snake=zigzag,shorten >=2pt,shorten <=2pt,segment amplitude=1pt,segment length=4pt,line before snake=2pt,line after snake=5pt,},
		kernels1/.style={snake=zigzag,segment amplitude=0.5pt,segment length=2pt},
		rho1/.style={densely dotted,semithick},
        rho/.style={densely dashed,semithick,shorten >=2pt,shorten <=2pt},
           testfcn/.style={dotted,semithick,shorten >=2pt,shorten <=2pt},
           visible/.style={draw, circle, fill, inner sep=0.25ex},
        renorm/.style={shape=circle,fill=white,inner sep=1pt},
        labl/.style={shape=rectangle,fill=white,inner sep=1pt},
        xic/.style={very thin,circle,fill=symbols,draw=black,inner sep=0pt,minimum size=1.2mm},
        xi/.style={very thin,circle,fill=blue!10,draw=black,inner sep=0pt,minimum size=1.2mm},
	xib/.style={very thin,circle,fill=blue!10,draw=black,inner sep=0pt,minimum size=1.6mm},
	xie/.style={very thin,circle,fill=green!50!black,draw=black,inner sep=0pt,minimum size=1mm},
	xid/.style={very thin,circle,fill=symbols,draw=black,inner sep=0pt,minimum size=1.6mm},
	edgetype/.style={very thin,circle,draw=black,inner sep=0pt,minimum size=5mm},
	nodetype/.style={very thick,circle,draw=black,inner sep=0pt,minimum size=5mm},
	kernels2/.style={very thick,draw=connection,segment length=12pt},
clean/.style={thin,circle,fill=black,inner sep=0pt,minimum size=1mm},	not/.style={thin,circle,fill=symbols,draw=connection,fill=connection,inner sep=0pt,minimum size=0.8mm},
	>=stealth,
        }
\def\DeclareSymbol#1#2#3{%
	\expandafter\gdef\csname MH@symb@#1\endcsname{\tikzsetnextfilename{symbol#1}%
	\tikz[baseline=#2,scale=0.15,draw=symbols,line join=round]{#3}}%
	\expandafter\gdef\csname MH@symb@#1s\endcsname{\scalebox{0.75}{\tikzsetnextfilename{symbol#1}%
	\tikz[baseline=#2,scale=0.15,draw=symbols,line join=round]{#3}}}%
	\expandafter\gdef\csname MH@symb@#1ss\endcsname{\scalebox{0.65}{\tikzsetnextfilename{symbol#1}%
	\tikz[baseline=#2,scale=0.15,draw=symbols,line join=round]{#3}}}%
	}
\def\<#1>{\ifthenelse{\boolean{mmode}}{\mathchoice{\csname MH@symb@#1\endcsname}{\csname MH@symb@#1\endcsname}{\csname MH@symb@#1s\endcsname}{\csname MH@symb@#1ss\endcsname}}{\csname MH@symb@#1\endcsname}}
 \def\1{\mathbf{\symbol{1}}}
\DeclareMathAlphabet{\mathpzc}{OT1}{pzc}{m}{it}
\def\eqref#1{(\ref{#1})}
\newcommand*{\bigcdot}{}
\DeclareRobustCommand*{\bigcdot}{%
  \mathbin{\mathpalette\bigcdot@{}}%
}
\newcommand*{\bigcdot@scalefactor}{.5}
\newcommand*{\bigcdot@widthfactor}{1.15}
\newcommand*{\bigcdot@}[2]{%
  \sbox0{$#1\vcenter{}$}
  \sbox2{$#1\cdot\m@th$}%
  \hbox to \bigcdot@widthfactor\wd2{%
    \hfil
    \raise\ht0\hbox{%
      \scalebox{\bigcdot@scalefactor}{%
        \lower\ht0\hbox{$#1\bullet\m@th$}%
      }%
    }%
    \hfil
  }%
}
\def\two{{\<generic>\kern0.05em\<genericb>}}
\def\twoI{{\<Ito>\kern0.05em\<Itob>}}
\def\st{\mathsf{fgt}}
\def\mail#1{\burlalt{#1}{mailto:#1}}
\begin{document}

\def\st{\mathsf{fgt}}
\def\mail#1{\burlalt{#1}{mailto:#1}}

\title{Novikov algebras and multi-indices in regularity structures}
\author{Yvain Bruned$^1$, Vladimir Dotsenko$^2$}
\institute{ 
 IECL (UMR 7502), Université de Lorraine
 \and IRMA (UMR 7501), Université de Strasbourg \\
Email:\ \begin{minipage}[t]{\linewidth}
\mail{yvain.bruned@univ-lorraine.fr},
\\ \mail{vdotsenko@unistra.fr}.
\end{minipage}}
\def\dsqcup{\sqcup\mathchoice{\mkern-7mu}{\mkern-7mu}{\mkern-3.2mu}{\mkern-3.8mu}\sqcup}

\maketitle

\begin{abstract}
\ \ \ \ In this work, we introduce multi-Novikov algebras, a generalisation of Novikov algebras with several binary operations indexed by a given set, and show that the multi-indices recently introduced in the context of singular stochastic partial differential equations can be interpreted as free multi-Novikov algebras. This is parallel to the fact that decorated rooted trees arising in the context of regularity structures are related to free multi-pre-Lie algebras.  
\end{abstract}

\setcounter{tocdepth}{1}
\tableofcontents

\section{ Introduction }

The study of singular stochastic partial differential equations (SPDEs) has reached a high degree of generality via the theory of regularity structures invented by Martin Hairer in \cite{reg}. One of the key ideas in this domain is to provide a local description of the solution using Butcher-type series based on decorated trees. These series are made of recentered iterated integrals built upon the noise, the differential operator of the equation and its non-linearities. Then, Hopf algebras describe the recentering of these integrals and their renormalisation as they contain the ill-defined distributional products from the singular SPDEs. The main step on the algebraic side in \cite{BHZ} was to introduce the concept of decorated trees with decorations both on the nodes and the edges. Two Hopf algebras in co-interaction are built for constructing the renormalised model that are the renormalised recentered iterated integrals describing locally the solution of a given singular SPDE (Hopf algebras in (co)interaction were formalised in the work \cite{CEFM11}. Then, \cite{CH16} shows convergence of the renormalised model. Finally, \cite{BCCH} provides a systematic way to obtain the renormalised equation: renormalising the iterated integrals induces a change of the solution and therefore of the equation one starts with. For a review on these results see \cite{FrizHai,BaiHos}. The Hopf algebras at play in the context of singular SPDEs are nowadays sufficiently well understood. Indeed, one central object is the deformed grafting product \cite{BCCH,BM22} that can be extended to a post-Lie product (see  \cite{BK23}). This product is the cornerstone of the construction of the recentering Hopf algebra which allows to build recentered iterated integrals. This construction is then the result of the application of an adaptation of the Guin--Oudom construction \cite{Guin1,Guin2} to the post-Lie context, see \cite{ELM}. Post-Lie algebras have been first developed for posets partition in \cite{Val} and for Lie-Butcher integrators in \cite{ML08,ML13}.
A nice algebraic feature of the deformed grafting product is that it freely generates the space of decorated trees; this is an extension of the classical result on free pre-Lie algebras \cite{ChaLiv}. One of the consequences of that fact is that the pre-Lie morphism maps used for renormalisation are uniquely defined. This has been first used at the level of rough paths in \cite{BCFP} and then in the context of regularity structures in \cite{BCCH}.

Recently, a different combinatorial approach has been proposed for singular SPDEs in the context of regularity structures. The idea is to gather decorated trees that have the same coefficients in the  expansion of the solution into a unique symbol. In the simplest possible case, these symbols are multi-indices encoding the number of nodes of a given tree and for each node its arity. The multi-indices appear first in the context of quasi-linear equations in \cite{OSSW} and their Hopf algebra structures for the recentering has been unveiled in  \cite{LOT}. Then, convergence of the multi-indices model has been obtained in \cite{LOTT} see also \cite{T,GT} for further extensions. The renormalised equations and multi-indices models for a large class of equations are given in \cite{BL23}. One can look at \cite{LO23,OST} for surveys on multi-indices. The  approach of \cite{LOTT} is recursive and it is not using diagrams like in \cite{CH16}. Such an approach is also valid within the context of decorated trees (see \cite{BN23,HS,BH23}).
Moreover, the recentering Hopf algebra for multi-indices can be obtained via a post-Lie product introduced in \cite{BK23} (see also \cite{JZ} for an extension on the algebra of derivations). 

In the context of multi-indices, the corresponding pre-Lie algebra is not free, and therefore the fundamental free object has been missing since the introduction of these combinatorial objects.
The simplest possible instance of multi-indices corresponds to considering a set of abstract variables $(z_k)_{k \in \mathbb{N}}$, where the variable $z_k$ encodes nodes of the tree that have $k$ children. Multi-indices $\beta$ over $\mathbb{N}$  measure the frequency of the variables, and can be compactly represented as monomials 
\begin{equs}
	z^{\beta} : = \prod_{k \in \mathbb{N}} z_k^{\beta(k)}. 
\end{equs}
The pre-Lie product on the vector space of such monomials is defined as 
\begin{equs}
	z^{\beta}\triangleright  z^{\beta'}= z^{\beta}D(z^{\beta'}),
\end{equs}
where $D$ is the derivation given by
\begin{equs}
	D = \sum_{k \in \mathbb{N}} (k+1) z_{k+1} \partial_{z_k}.  
\end{equs}
The action of this operator corresponds to adding one child to one of the nodes of our tree in all possible ways. 
 This corresponds to the convention where the monomial corresponding to the tree comes with a combinatorial coefficient equal to the product of $\frac{1}{n_v!}$, where $n_v$ is the number of children of the node $v$. This way, adding a child to a node requires a correction of the coefficient given precisely by multiplication by $k+1$. This is more in agreement with the way B-series are presented, and corresponds to the convention adopted in \cite{LOT}. There is an alternative convention to~\cite{BHE25} where $k+1$ is replaced by $1$, which later has an impact on the symmetry factors of multi-indices in expansions/approximations of solutions of SPDEs/ODEs.

Since we would like to only consider the multi-indices that can be associated to trees, one is forced to focus on the multi-indices satisfying the so called ``populated'' condition \cite{LOT} postulating that
\begin{equs}
	\label{populated_1}
	 \sum_{k \in \mathbb{N}} (1 - k)\beta(k)  = 1.
\end{equs}
It was conjectured by Dominique Manchon that populated multi-indices form  the free Novikov algebra.
A Novikov algebra is a vector space equipped with a bilinear product $x,y\mapsto x\triangleright y$, satisfying the identities
\begin{gather}
  (x \triangleright y) \triangleright z  - x \triangleright (y \triangleright z) = (y \triangleright x) \triangleright z  - y \triangleright (x \triangleright z) 
  \\   (x \triangleright y) \triangleright z   = (x \triangleright z)\triangleright y ;
\end{gather}
in other words, in addition to the pre-Lie identity, the operators $r_y\colon x\mapsto x\triangleright y$ commute between themselves: $r_yr_z=r_zr_y$ for all $y,z$. This type of algebras was first considered in the study of Hamiltonian operators in the formal calculus of variations in \cite{GD}, and then rediscovered by Balinskii and Novikov in the context of classification of linear Poisson brackets of hydrodynamical type \cite{BN85}; the term ``Novikov algebras'' was introduced in \cite{O92}. It turns out that the corresponding theorem does exist in the literature; it goes back to \cite{DL}. The correspondence has been made explicit in \cite[Lemma 3.5]{Li23}\footnote{More precisely, we refer to the second arxiv version of \cite{Li23} that appeared around one week before the present work.}. 

\begin{theorem}\label{free_Novikov}{{\rm \cite[Lemma 3.5]{Li23}}} 
The Novikov algebra of populated multi-indices is isomorphic to the free algebra on one generator. 
\end{theorem}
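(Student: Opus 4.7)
The plan is to verify that populated multi-indices form a Novikov algebra under $\triangleright$, to construct a surjective Novikov morphism from the free Novikov algebra on one generator onto this algebra, and then to conclude via dimension counting in each homogeneous degree.

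\textbf{Step 1: Novikov axioms.} I would first check that $\triangleright$ satisfies both defining relations \eqref{eq:exchgen} and \eqref{eq:rcom}. Since $D$ is a derivation of the commutative polynomial algebra $\mathbb{K}[z_0,z_1,\ldots]$, right-commutativity is essentially automatic:
\begin{equation*}
(z^\alpha\triangleright z^\beta)\triangleright z^\gamma = (z^\alpha Dz^\beta)Dz^\gamma = z^\alpha\cdot Dz^\beta\cdot Dz^\gamma,
\end{equation*}
which is symmetric in $\beta,\gamma$ by commutativity, giving \eqref{eq:rcom}. For the pre-Lie identity \eqref{eq:exchgen}, expanding $D(z^\beta Dz^\gamma)=Dz^\beta\cdot Dz^\gamma+z^\beta\cdot D^2 z^\gamma$ yields
\begin{equation*}
z^\alpha\triangleright(z^\beta\triangleright z^\gamma)-(z^\alpha\triangleright z^\beta)\triangleright z^\gamma = z^\alpha z^\beta D^2 z^\gamma,
\end{equation*}
which is symmetric in $\alpha,\beta$. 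I would also check that $\triangleright$ preserves the populated condition \eqref{populated_1}: each application of $D$ increases $\sum k\beta(k)$ by $1$ and the total weight $|\beta|$ by $0$, while multiplying by $z^\alpha$ adds the contributions of $\alpha$, so the ``defect'' $|\gamma|-\sum k\gamma(k)$ behaves additively in a way that preserves the value $1$.

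\textbf{Step 2: Generation by $z_0$.} By the universal property of the free Novikov algebra $\mathrm{Nov}(x)$ on one generator $x$ and Step 1, sending $x\mapsto z_0$ defines a unique Novikov morphism $\phi:\mathrm{Nov}(x)\to P$, where $P$ is the space of populated multi-indices. I would prove by induction on the total degree $n=|\beta|$ that $\phi$ is surjective. The base case $n=1$ is just $z_0$. For the inductive step, observe that $z_0\triangleright z^\beta$ adds a new factor $z_0$ to $z^\beta$ while incrementing the arity of one existing node; iterating and combining with products of the form $z^\gamma\triangleright z_0=z^\gamma z_1$, one recovers every populated multi-index of degree $n$. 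The check for $n=3$ is already instructive: $z_0\triangleright(z_0\triangleright z_0)=z_0 z_1^2+2z_0^2 z_2$, while $(z_0\triangleright z_0)\triangleright z_0=z_0 z_1^2$, and subtracting yields $z_0^2 z_2$. I would formalise this with an inductive rearrangement using right-commutativity to reduce every populated multi-index of degree $n$ to expressions of the form $z^\gamma\triangleright z^\delta$ with $|\gamma|,|\delta|<n$.

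\textbf{Step 3: Dimension count.} To show that $\phi$ is injective, I would compare graded dimensions. The populated multi-indices of degree $n$ correspond to tuples of non-negative integers $(\beta(0),\beta(1),\ldots)$ with $\sum\beta(k)=n$ and $\sum k\beta(k)=n-1$, which are in bijection with partitions of $n-1$; hence $\dim P_n=p(n-1)$. On the other hand, the classical description of the free Novikov algebra on one generator due to Dzhumadildaev--Löfwall \cite{DL} gives $\dim\mathrm{Nov}(x)_n=p(n-1)$. Since $\phi$ is a graded surjection between spaces of equal finite dimension in each degree, it is an isomorphism.

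\textbf{Main obstacle.} Step 1 is a direct derivation calculation and Step 3 is a matching of known counts, so the substantive difficulty lies in Step 2: producing an inductive recipe that expresses an arbitrary populated multi-index as a Novikov-polynomial in $z_0$. The non-trivial feature is that the pre-Lie product of two populated multi-indices produces a sum over all insertion positions, so isolating a single multi-index requires taking suitable differences guided by the right-commutativity relation.
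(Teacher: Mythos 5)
Your outline is correct and follows the same overall strategy as the paper and as \cite{DL}: realise the populated multi-indices inside the differential polynomial algebra, show they are generated by $z_0$ under $\triangleright$ (your Step 2 is exactly the specialisation of Proposition~\ref{prop:lowerbound} to $|A|=|X|=1$), and conclude by a graded dimension count. The one genuine divergence is in the upper bound. The paper does not cite the dimension of the free Novikov algebra; it \emph{proves} it, by exhibiting a spanning set of the free (multi-)magmatic algebra modulo the Novikov relations consisting of normal ordered left-leaning products (Propositions~\ref{prop:ordright} and~\ref{normal_coset}) and matching these bijectively with the populated monomials. You instead invoke $\dim\mathrm{Nov}(x)_n=p(n-1)$ from \cite{DL}. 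Since the paper itself attributes Theorem~\ref{free_Novikov} to \cite{DL}, this is a legitimate citation, but be aware that the dimension formula in \cite{DL} is essentially the content of \cite[Th.~7.8]{DL}, i.e.\ the hard half of the very statement you are proving; your argument is therefore only as self-contained as that reference, and it is precisely this combinatorial upper bound that the paper's Section~\ref{sec::3} supplies in order to generalise the theorem to several operations and several generators, where no off-the-shelf dimension formula is available.

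Two small points. In Step 2 the degree-$3$ computation gives $z_0\triangleright(z_0\triangleright z_0)-(z_0\triangleright z_0)\triangleright z_0=2z_0^2z_2$, not $z_0^2z_2$ (harmless in characteristic zero, but worth stating the ground field hypothesis). More substantively, the inductive generation argument needs to be carried out with care when the monomial contains a variable $z_k$ with $k\ge 2$: the populated condition must be used to guarantee the presence of at least $k$ factors $z_0$, so that one can write the monomial as a $\triangleright$-product of lower-degree populated monomials modulo correction terms to which the induction applies. This is exactly the manipulation in the proof of Proposition~\ref{prop:lowerbound}; you have correctly identified it as the main obstacle, but as written it remains a sketch rather than a proof.
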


In this paper, we prove a generalisation of this theorem that is suitable for singular SPDEs, as the theorem as it stands only applies to rough paths related to ordinary differential equations \cite{Li23}. General multi-indices are now defined using formal variables of the form $ z_{(\mfl,w)} $ where $\mfl$ belongs to a finite set $\mfL^-$ (depending on our SPDE) and $w$ is a commutative monomial in the alphabet $A=\mathbb{N}^{d+1}$. One can define a collection of derivations $ D^{(\bn)} $ indexed by $A$, and operations $\triangleright_a$, $a\in A$, and give appropriate populated conditions ensuring that the multi-indices arise from decorated trees. These very general multi-indices have been proposed in \cite{BL23}. Let us  introduce a new algebraic structure that we shall call the multi-Novikov algebra, for which such multi-indices form a free object.  Formally, a $A$-multi-Novikov algebra is a vector space equipped with bilinear products $x,y\mapsto x\triangleright_a y$ indexed by a set $ A $ which satisfy the identities
\begin{gather}
	(x \triangleright_a y) \triangleright_b z  - x \triangleright_a (y \triangleright_b z) = (y \triangleright_a x) \triangleright_b z  - y \triangleright_a (x \triangleright_b z)\label{eq:exchgen_bis}, 
	\\ (x \triangleright_a y) \triangleright_b z  - x \triangleright_a (y \triangleright_b z) = (x\triangleright_b y) \triangleright_a z - x \triangleright_b (y \triangleright_a z)\label{eq:exchA_bis},
	\\   (x \triangleright_a y) \triangleright_b z   = (x \triangleright_b z)\triangleright_a y\label{eq:rcom_bis},
\end{gather}
for all $ a,b\in A$. This is a generalisation of Novikov algebras which is an analogue of the generalisation from pre-Lie algebras to multi-pre-Lie algebras in \cite{BCCH}\footnote{{Shortly after this paper was put on the arxiv, the authors in \cite[Ex. 2.23]{ZGG23} recovered the identities of a multi-Novikov algebra from an operadic context.}}. 

Our first main result is the following generalisation of Theorem~\ref{free_Novikov}. 
\begin{theorem} \label{free_Novikov_general} 
The $\mathbb{N}^{d+1}$-multi-Novikov algebra of populated general multi-indices is isomorphic to free algebra generated by the set $\mfL^-$.
\end{theorem}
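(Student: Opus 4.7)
The plan is to adapt the strategy used for the single-operation Theorem~\ref{free_Novikov} due to Dzhumadil'daev and L\"ofwall \cite{DL}. The proof will proceed in three steps: first, verify that populated general multi-indices form a multi-Novikov algebra under the operations $z^{\beta} \triangleright_{a} z^{\beta'} := z^{\beta}D^{(a)}(z^{\beta'})$ indexed by $a \in A$; second, construct a canonical morphism $\varphi$ from the free multi-Novikov algebra on $\mfL^-$ to this multi-Novikov algebra; third, show that $\varphi$ is an isomorphism.

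For the first step, identity \eqref{eq:exchgen_bis} is the multi-pre-Lie identity and follows from the Leibniz rule satisfied by each $D^{(a)}$ combined with the commutativity of the polynomial ring, exactly as in the analogous verification for multi-pre-Lie algebras of trees \cite{BCCH}. Identity \eqref{eq:exchA_bis} reduces to the commutation $D^{(a)} D^{(b)} = D^{(b)} D^{(a)}$ of the derivations, which holds because adding two children of types $a$ and $b$ to the same node in opposite orders yields the same multi-index. Identity \eqref{eq:rcom_bis} reduces to the commutativity of the underlying polynomial ring (the extra factors $D^{(a)}(y)$ and $D^{(b)}(z)$ simply multiply on the right of $x$). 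Preservation of the populated condition under $\triangleright_a$ is a direct computation balancing the net contribution of the newly created node. The morphism $\varphi$ is then defined by sending each generator $\mfl \in \mfL^-$ to $z_{(\mfl, 1)}$. Its surjectivity follows by induction on the total weight: any populated multi-index of positive weight arises as $D^{(a)}$ applied to a multi-index of strictly smaller weight, which translates into a factorisation of the form $z^{\beta} = z^{\beta_0} \triangleright_{a} z^{\beta_1}$.

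The main obstacle is injectivity. The idea is to use the three multi-Novikov identities to rewrite any element of the free algebra as a linear combination of left combs
$(\cdots ((\mfl_0 \triangleright_{a_1} \mfl_1) \triangleright_{a_2} \mfl_2) \cdots ) \triangleright_{a_n} \mfl_n,$
and then invoke \eqref{eq:rcom_bis} together with \eqref{eq:exchA_bis} to reorder the pairs $(a_i, \mfl_i)_{i \geq 1}$ up to arbitrary permutation. This produces a spanning set whose indexing data matches populated general multi-indices, giving a dimensional upper bound that matches the lower bound coming from the surjectivity of $\varphi$.

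The hardest sub-task is to verify that this spanning set is actually a basis, i.e.\ that no unexpected relations are forced by the multi-Novikov axioms. One route is to adapt the operadic and Koszul-duality argument of \cite{DL} to the $A$-coloured setting, showing that the operad governing multi-Novikov algebras has the expected Hilbert series. An alternative, more hands-on approach is to construct an explicit section of $\varphi$ using the normal form and verify by induction on combinatorial complexity that this section is a well-defined algebra map; either route requires careful bookkeeping of the reordering process, and this is where the bulk of the technical work is expected to lie.
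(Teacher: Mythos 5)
Your first two steps are sound and coincide with what the paper does: the verification of the multi-Novikov identities on populated general multi-indices is Proposition~\ref{multi_Nov_example}, and your surjectivity-by-induction-on-degree argument is exactly Proposition~\ref{prop:lowerbound}. The gap is in the injectivity step, and it is not merely a deferred technicality: the normal form you propose cannot work. A left comb of generators
\[
(\cdots((\mfl_0\triangleright_{a_1}\mfl_1)\triangleright_{a_2}\mfl_2)\cdots)\triangleright_{a_n}\mfl_n
\]
evaluates, in the differential realisation $x\triangleright_a y=x\partial_a(y)$, to the single monomial $z_{(\mfl_0,1)}\prod_{i=1}^n z_{(\mfl_i,a_i)}$, in which every variable carries a word of length at most one. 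A populated multi-index such as $z_{(\mfl,1)}^2\,z_{(\mfl',w)}$ with $\length{w}=2$ (the condition \eqref{populated_bis} reads $2\cdot 1+1\cdot(-1)=1$) is therefore not in the image of the span of left combs, so left combs cannot span the free multi-Novikov algebra either, since you have already proved the map onto populated multi-indices is surjective. The failure is visible in the smallest case $|A|=|\mfL^-|=1$ in degree $3$: every generating relation of the ideal $I$ becomes vacuous when all three arguments equal the single generator $x$, so the degree-$3$ component of the free Novikov algebra is $2$-dimensional, spanned by $(x\triangleright x)\triangleright x$ and $x\triangleright(x\triangleright x)$, whereas left combs contribute only the first. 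Consequently the ``spanning set whose indexing data matches populated general multi-indices'' does not exist in the form you describe, and your dimension count cannot close.

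The repair, which is what the paper (following \cite{DL}) carries out, is to base the normal form on \emph{right}-normed products $y_1\triangleright_{a_1}(y_2\triangleright_{a_2}(\cdots(y_n\triangleright_{a_n}z)\cdots))$ with $z\in X$: it is the innermost factor that accumulates iterated derivatives, since expanding such a product produces $\partial_{a_1}\cdots\partial_{a_n}(z)\,y_1\cdots y_n$ plus further terms, and this is exactly what is needed to reach variables $z_{(\mfl,w)}$ with $\length{w}\ge 2$. One then restricts to \emph{left-leaning} products (Definition~\ref{leaning}), in which $y_2,\ldots,y_n$ are generators and $y_1$ is recursively of the same shape, and imposes ordering and normality conditions (Definitions~\ref{ordered} and~\ref{normal}); the identities \eqref{eq:exchA_bis} and \eqref{eq:rcom_bis} that you invoke do the reordering, but only inside a double induction on degree and on the length of the right-normed factorisation (Propositions~\ref{prop:ordright} and~\ref{normal_coset}), and the match with the populated monomials of $\CDi_A\langle X\rangle$ goes through Knuth's rotation correspondence between planar binary trees and planar rooted trees. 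The paper then obtains Theorem~\ref{free_Novikov_general} as a corollary of the structural statement that $\Nov_A\langle X\rangle$ is the multi-Novikov subalgebra of $\CDi_A\langle X\rangle$ generated by $X$ (Theorem~\ref{th:freeNovikov}), followed by a dictionary identifying that subalgebra with populated general multi-indices; essentially all of the work of the proof lives in that intermediate theorem, which your proposal leaves unaddressed.
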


For capturing the complexity of the multi-indices for singular SPDEs, one has to introduce other derivations $\partial_i$, $0\le i\le d$, that satisfy, together with the derivations $ D^{(\bn)} $, the following relations:
\begin{equs}
  D^{(\bn)}D^{(\mathbf{m})} & =  D^{(\mathbf{m})}D^{(\bn)}, \quad \partial_i \partial_j = \partial_j \partial_i
  \\  D^{(\bn)}  \partial_i &  = \partial_i D^{(\bn)}  + n_iD^{(\bn-e_i)},
\end{equs}
where $e_i$ is the standard basis vector of $\mathbb{N}^{d+1}$.There is a corresponding generalisation of multi-indices which we shall call SPDE multi-indices. Our second main result is the following theorem.

\begin{theorem} \label{free_Novikov_general_monomial} 
The $\mathbb{N}^{d+1}$-multi-Novikov algebra of populated SPDE multi-indices is isomorphic to free algebra generated by the set $\mathbb{N}^{d+1}\times \mfL^-$.
\end{theorem}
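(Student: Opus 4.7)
The plan is to leverage Theorem~\ref{free_Novikov_general} by absorbing the extra derivations $\partial_i$ into an enlarged set of primitive generators. Since the $\partial_i$'s commute among themselves, for $\bn=(n_0,\ldots,n_d)\in\mathbb{N}^{d+1}$ and $\mfl\in\mfL^-$ the element $y_{(\bn,\mfl)}:=\partial^{\bn}z_{(\mfl,1)}$ is well defined, where $\partial^{\bn}:=\partial_0^{n_0}\cdots\partial_d^{n_d}$. The claim to establish is that $\{y_{(\bn,\mfl)}\}_{(\bn,\mfl)\in\mathbb{N}^{d+1}\times\mfL^-}$ is a free multi-Novikov generating set of the populated SPDE multi-indices.

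First I would construct the canonical morphism $\Phi\colon\mcN_{\mathrm{free}}\to\mcN_{\mathrm{SPDE}}$ from the free multi-Novikov algebra on $\mathbb{N}^{d+1}\times\mfL^-$ sending the formal generator $(\bn,\mfl)$ to $y_{(\bn,\mfl)}$, and verify that the SPDE products $\triangleright_a$ satisfy the multi-Novikov identities \eqref{eq:exchgen_bis}--\eqref{eq:rcom_bis} by direct calculation using the commutators $\partial_iD^{(\bn)}=D^{(\bn)}\partial_i+n_iD^{(\bn-e_i)}$ and $[D^{(\bn)},D^{(\bn')}]=0$. Surjectivity then follows by induction on the total word length of a populated multi-index: any variable $z_{(\mfl,w)}$ with $w=e^{\bn}$ equals $y_{(\bn,\mfl)}$ up to a correction coming from the lower-order term $n_iD^{(\bn-e_i)}$ in the commutator, which is expressible as a $\triangleright_a$-product of strictly shorter multi-indices already in the image; the inductive step then uses that products of such generators exhaust the populated multi-indices.

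For injectivity I would introduce a filtration by total $\partial$-degree and pass to the associated graded. In $\mathrm{gr}\,\mcN_{\mathrm{SPDE}}$ the non-homogeneous commutator $\partial_iD^{(\bn)}=D^{(\bn)}\partial_i+n_iD^{(\bn-e_i)}$ degenerates to strict commutation $\partial_iD^{(\bn)}=D^{(\bn)}\partial_i$, so the derivations $\partial_i$ act as formal commuting markers. The graded target then decomposes as a direct sum, indexed by the multi-degree $\bn$, of copies of the multi-Novikov algebra of populated general multi-indices on $\mfL^-$. By Theorem~\ref{free_Novikov_general} each graded piece is free, and reassembling them reproduces the free multi-Novikov algebra on $\mathbb{N}^{d+1}\times\mfL^-$. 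Since $\mathrm{gr}\,\Phi$ is then an isomorphism, and the filtrations are exhaustive and bounded below on each finite-dimensional multi-degree component, $\Phi$ itself is an isomorphism.

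The main obstacle is the precise handling of this filtration: one must verify that both the multi-Novikov products $\triangleright_a$ and the populated condition are compatible with the $\partial$-degree filtration, and that no relations are lost when passing to the associated graded or gained when reassembling. The inhomogeneity of the commutation relation makes this bookkeeping delicate --- the lower-order corrections must live strictly below in the filtration, and the combinatorial identification of the graded pieces with copies of populated general multi-indices parameterised by $\bn$ requires a careful matching of the populated conditions on both sides, in particular checking that the populated condition $\sum_{\bn,\mfl}(1-|\bn|\text{-contribution})\beta(\bn,\mfl)=1$ in the SPDE setting restricts on each graded slice to the condition of Theorem~\ref{free_Novikov_general}.
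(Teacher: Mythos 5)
Your strategy is sound and arrives at the same generating set as the paper, but by a genuinely different route. The paper does not argue by filtration at all: it introduces the general notion of a $\mathfrak{g}$-extended $\cP_A^{\mathrm{lin}}$-algebra (Definition~\ref{def_action_g}), proves once and for all that the free such object is the free $\cP_A^{\mathrm{lin}}$-algebra on $U(\mathfrak{g})\otimes W$ (Proposition~\ref{prop:extlin}, whose proof is a direct check that the derivation-distribution identities are compatible with the Lie algebra relations and with the $\cP_A^{\mathrm{lin}}$ identities), identifies the SPDE multi-indices with the free $\mathfrak{g}$-extended $\CDi_A$-algebra on $\mfL^-$ for $\mathfrak{g}$ abelian of dimension $d+1$, and concludes by functoriality from Theorem~\ref{th:freeNovikov}. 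What the paper's route buys is reusability (the same proposition immediately gives the tree-side Theorem~\ref{main_theorem_trees}) and it subsumes the PBW-type normal-form issue into the statement about $U(\mathfrak{g})\otimes W$. What your route buys is concreteness: an explicit morphism $\Phi$ on explicit generators $z_{(\mfl,d^{\bn})}$ and an elementary surjectivity induction, at the cost of having to set up the filtration machinery by hand.

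One step of your argument is wrong as stated and needs repair. The associated graded of the SPDE multi-indices for the $\partial$-degree filtration is \emph{not} a direct sum, indexed by $\bn$, of copies of the populated general multi-indices on $\mfL^-$: a single monomial mixes variables $z_{(\mfl_j,w_j d^{\bn_j})}$ with different $\bn_j$, so the slices of fixed total multi-degree are not subalgebras and are strictly larger than what you describe. The correct identification is that the associated graded is the algebra of populated general multi-indices on the \emph{enlarged} alphabet $\mathbb{N}^{d+1}\times\mfL^-$, with generator $(\bn,\mfl)$ corresponding to $z_{(\mfl,d^{\bn})}$; freeness then follows from a single application of Theorem~\ref{free_Novikov_general} rather than from ``reassembling'' copies. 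With that correction, and provided you actually verify the normal-form statement you defer to (that monomials with all $d_i$ letters pushed to one side form a basis of $\cA$, so that the filtration quotients have the claimed dimensions), the injectivity argument via $\mathrm{gr}\,\Phi$ goes through.
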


Our approach also unravels the functoriality of such results: our theorems are directly compatible with the corresponding theorems on the multi-pre-Lie level.

Let us outline the paper by summarising the content of its sections. In Section~\ref{sec::2}, we introduce general multi-indices \eqref{general_multi_indices} associated to a given singular SPDE \eqref{set01}. These multi-indices were first introduced in \cite{BL23}.  They encode derivatives of the non-linearities in the solution with respect to the unknown function $u$ and its iterated partial derivatives. We recall the associated derivations \eqref{derivations} to these multi-indices together with the populated condition \eqref{populated_bis}. From Proposition~\ref{multi_Nov_example}, we motivate a new structure that we call multi-Novikov algebra in Definition~\ref{def_multi_novikov}. One of the main examples is the linear span of populated general multi-indices denoted by  $\mathcal{M}^{\mathrm{gen}}$  equipped with the products $ (\triangleright_{{\bf{n}}})_{{\bf{n}} \in \mathbb{N}^{d+1}}  $ defined in \eqref{def_products}. We finish the section by stating one of our main theorems that is Theorem~\ref{main_theorem_multi_indices} saying that the multi-Novikov algebra of populated general multi-indices is free.

In Section~\ref{sec::3}, for proving Theorem~\ref{main_theorem_multi_indices} we have to prove Theorem~\ref{th:freeNovikov} which is a non trivial extension of \cite[Theorem~7.8]{DL}. This theorem identifies the free multi-Novikov algebra with the multi-Novikov subalgebra of the free commutative multidifferential algebra generated by the generators of that latter algebra. This subalgebra can be identified as populated general multi-indices in Proposition~\ref{prop:lowerbound} and in the proof of Theorem \ref{main_theorem_multi_indices} at the end of the section. For proving Theorem~\ref{th:freeNovikov}, one starts with a lower bound in Corollary~\ref{lower_bound} on the dimension of the degree $n$ component of the free multi-Novikov algebra and then compute the same upper bound which allows to conclude. The strategy for the upper bound  is to consider the multi-magmatic algebra quotiented by the relations of multi-Novikov and to show that in Proposition~\ref{normal_coset} this object
is spanned by cosets of normal ordered left-leaning products introduced in Definition~\ref{leaning}  (leaning), Definition~\ref{ordered} (ordered) and Definition~\ref{normal} (normal). 

In Section~\ref{sec::4}, we introduce SPDE multi-indices which are based on words satisfying the relations \eqref{relation_words}. This is a new structure in comparison to the existing literature \cite{OSSW,LOT,BL23}. We explain the differences in Remark~\ref{remark_z_k}: it avoids the use of extra variables $z_{k}$ with $k \in \mathbb{N}^{d+1}$. We extend naturally the definition of the derivations and the populated condition. 

In Section~\ref{sec::5}, we start by observing in  Proposition~\ref{linearised_Novikov} that linearised versions of multi-pre-Lie algebras and multi-Novikov are still in the same class. Then, if $V=\mathrm{Vect}(A)$ ($ A $ is the index set for the products used) carries a representation of a Lie algebra $\mathfrak{g}$, then  one can set in   Definition~\ref{def_action_g} an action of $\mathfrak{g}$ as generalised derivations. One can characterise the free object for this new structure in
Proposition \ref{prop:extlin}.
The end of the section uses the previous proposition to prove Theorem~\ref{second_main_theorem_multi_indices} asserting that the multi-Novikov algebra of populated SPDE multi-indices is free.

In Section~\ref{sec::6}, we present the planar decorated trees quotiented by the relations \eqref{relation_trees} corresponding to the SPDE multi-indices. One can realise by functoriality that Theorem \ref{main_theorem_trees} like Theorem~\ref{second_main_theorem_multi_indices}  is also a consequence of Proposition\ref{prop:extlin}. We finish the section with the introduction of multi-symmetric braces given in Definition~\ref{multi_braces}. Their explicit expression for multi-Novikov structure in Proposition~\ref{brace_Novikov}
 allows us to recover in a conceptual way the morphism from decorated trees to multi-indices of \cite[Remark 5.6]{BK23}.

\subsection*{Acknowledgements}

{\small
  Y. B. thanks Dominique Manchon who conjectured a relationship of multi-indices to free Novikov algebras. Y. B. thanks the ANR via the project LoRDeT (Dynamiques de faible régularité via les arbres décorés) from the projects call T-ERC\_STG. The project LoRDeT allowed to fund a workshop in Nancy entitled "Hopf algebras, operads, deformations for singular dynamics" where the authors first met. Y. B. gratefully acknowledges funding support from the European Research Council (ERC) through the ERC Starting Grant Low Regularity Dynamics via Decorated Trees (LoRDeT), grant agreement No.\ 101075208. V. D. is funded by the ANR project HighAGT (ANR-20-CE40-0016), and by Institut Universitaire de France.
}

\section{General multi-indices}\label{sec::2}

In applications of the algebraic formalism developed in this paper, one deals with stochastic PDEs of the form
\begin{equation}\label{set01}
\left( \partial_t - 	\mcL \right) u = \sum_{\mfl\in\mfL^-} a^\mfl (\mathbf{u}) \xi_\mfl,
\end{equation}
where $u$ is a function of $d+1$ variables $t=x_0$, $x_1$, \ldots, $x_d$, $ \mathcal{L} $ is a differential operator in the variables $x_1$, \ldots, $x_d$, $\mfL^- $ is a finite set and the $ \xi_{\mfl} $ are space-time noises (random distributions). The bold letter $\mathbf{u}$ in each $a^\mfl(\mathbf{u})$ means that these are functions of $u$ and of its iterated partial derivatives.   For $\bn=(n_0,\ldots,n_d)\in\mathbb{N}^{d+1}$, one may consider the expressions 
 \[
u^{(\bn)}:=\frac{\partial_{x_0}^{n_0}\cdots \partial_{x_d}^{n_d}}{n_0!\cdots n_d!}(u),
 \]
which are the coefficients of the Taylor expansion of $u$ around the point $x$. To each $a^\mfl (\mathbf{u})$ one may apply the derivatives $\partial_{u^{(\bn)}}$ which pairwise commute. In this context, one typically works with polynomials in the expressions 
 \[
\left(\prod_{{\bf{n}}\in\mathbb{N}^{d+1}} \partial_{u^{(\bn)}}^{w_{\mfl}(\bn)}\right) 
(a^\mfl (\mathbf{u})),
 \]
where $w_{\mfl}(\bn)\in\mathbb{N}$ are the orders of derivatives applied to $a^\mfl (\mathbf{u})$. 
These expressions will be now encoded in a formal algebraic way via what we shall call the general multi-indices. For a set $I$, we define the set of multi-indices over $I$, denoted by $M(I)$, as the set of all finitely supported maps $w\colon I\to\mathbb{N}$, that is the maps for which all but finitely many $i\in I$ are mapped to $0$. We shall now consider $M(\mathbb{N}^{d+1})$, multi-indices over $\mathbb{N}^{d+1}$, which one can regard as commutative monomials in letters $\bn\in\mathbb{N}^{d+1}$.

Let us now introduce formal variables $z_{(\mfl,w)}$, $(\mfl,w) \in \mfL^-\times M(\mathbb{N}^{d+1})$, and define the \emph{general multi-indices} $\beta$ as monomials
\begin{equs} \label{general_multi_indices}
  z^{\beta} : = \prod_{(\mfl,w) \in \mfL^-\times M(\mathbb{N}^{d+1})}   z_{(\mfl,w)}^{\beta(\mfl,w)}. 
\end{equs}
These multi-indices have been introduced in the recent work \cite{BL23}. 
The vector space spanned by all monomials representing general multi-indices is just the polynomial algebra in variables $z_{(\mfl,w)}$, $(\mfl,w) \in \mfL^-\times M(\mathbb{N}^{d+1})$.  That algebra has, for each $\bn\in\mathbb{N}^{d+1}$, the derivation $D^{(\bn)}$ defined by the formula
 \begin{equs} \label{derivations}
D^{(\bn)}=\sum_{(\mfl,w) \in \mfL^-\times M(\mathbb{N}^{d+1})} (w({\bf{n}}) +1 ) \, z_{(\mfl,w+e_{\bn})}\partial_{z_{(\mfl,w)}}.
 \end{equs}
On the original level of polynomials in iterated partial derivatives of $a^\mfl (\mathbf{u})$, this derivation corresponds to applying one extra derivative $\partial_{u^{(\bn)}}$; as in the formula for the derivation $D$ in the introduction, the coefficient $w({\bf{n}})+1$ here corresponds to one of the possible choices of a basis.  

We shall be only interested in meaningful multi-indices called \emph{populated} satisfying the condition
\begin{equs}\label{populated_bis}
\sum_{(\mfl,w)} (1 - \length{w})\beta(\mfl,w)  = 1
\end{equs}
where  $\length{w}$ is the degree of $w$, viewed as a commutative monomial in letters $\bn\in\mathbb{N}^{d+1}$. The vector space spanned by all populated indices is denoted $\mathcal{M}^{\mathrm{gen}}$. We can define products $\triangleright_\bn$, $\bn\in\mathbb{N}^{d+1}$, on $\mathcal{M}^{\mathrm{gen}}$ by setting
\begin{equs} \label{def_products}
z^{\beta} \triangleright_\bn z^{\beta'} = z^{\beta} D^{(\bn)} (z^{\beta'}). 
\end{equs}
 Our goal now is to have a better understanding of this family of products.

\begin{proposition}\label{multi_Nov_example}
Let $A$ be a set and $R$ be a commutative associative algebra, and suppose that $\partial_a$, $a\in A$, are pairwise commuting derivations of that algebra. The bilinear operations $\triangleright_a$ on $R$ defined by the formulas
 \[
x\triangleright_a y:=x\partial_a(y)
 \]
satisfy the identities
\begin{gather*}
 (x \triangleright_a y) \triangleright_b z  - x \triangleright_a (y \triangleright_b z) = (y \triangleright_a x) \triangleright_b z  - y \triangleright_a (x \triangleright_b z), 
   \\ (x \triangleright_a y) \triangleright_b z  - x \triangleright_a (y \triangleright_b z) = (x\triangleright_b y) \triangleright_a z - x \triangleright_b (y \triangleright_a z),
\\   (x \triangleright_a y) \triangleright_b z   = (x \triangleright_b z)\triangleright_a y,
\end{gather*}
for all $ a,b\in A$.
\end{proposition}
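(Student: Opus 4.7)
The plan is to verify all three identities by direct computation, exploiting two inputs: the Leibniz rule applied to the definition $x \triangleright_a y = x \partial_a(y)$, and the commutativity assumptions (of the algebra $R$ and of the derivations $\partial_a$).

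First I would compute the expression $E_{a,b}(x,y,z) := (x \triangleright_a y) \triangleright_b z - x \triangleright_a (y \triangleright_b z)$ once and for all. Unfolding the definition gives
\[
E_{a,b}(x,y,z) = x\, \partial_a(y)\, \partial_b(z) - x\, \partial_a\bigl(y\, \partial_b(z)\bigr),
\]
and applying the Leibniz rule to the second term yields
\[
E_{a,b}(x,y,z) = -\,xy\, \partial_a \partial_b(z).
\]
This single formula is the engine for both (\ref{eq:exchgen_bis}) and (\ref{eq:exchA_bis}).

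For identity (\ref{eq:exchgen_bis}), I simply swap the roles of $x$ and $y$ to get $E_{a,b}(y,x,z) = -yx\, \partial_a \partial_b(z)$, which agrees with $E_{a,b}(x,y,z)$ by commutativity of $R$. For identity (\ref{eq:exchA_bis}), I swap the roles of $a$ and $b$ to get $E_{b,a}(x,y,z) = -xy\, \partial_b \partial_a(z)$, which agrees with $E_{a,b}(x,y,z)$ thanks to the pairwise commutativity of the derivations $\partial_a$. For identity (\ref{eq:rcom_bis}), I expand both sides directly: the left-hand side equals $x\, \partial_a(y)\, \partial_b(z)$, while the right-hand side equals $x\, \partial_b(z)\, \partial_a(y)$; commutativity of $R$ yields equality.

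There is no substantive obstacle in this proof: it is a routine unpacking of definitions. The only point worth flagging is that identity (\ref{eq:exchgen_bis}) uses only commutativity of $R$, identity (\ref{eq:exchA_bis}) uses only commutativity of the derivations, and identity (\ref{eq:rcom_bis}) uses only commutativity of $R$—so all three hypotheses on $R$ and on the family $\{\partial_a\}_{a \in A}$ are genuinely needed for the three defining relations of a multi-Novikov algebra.
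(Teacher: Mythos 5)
Your proof is correct and follows essentially the same route as the paper: compute the expression $(x \triangleright_a y) \triangleright_b z - x \triangleright_a (y \triangleright_b z) = -xy\,\partial_a\partial_b(z)$ once via the Leibniz rule, read off its symmetry in $x,y$ (commutativity of $R$) and in $a,b$ (commuting derivations) to get the first two identities, and verify the third directly from commutativity of $R$. (Incidentally, your sign $-xy\,\partial_a\partial_b(z)$ is the correct one --- the paper's proof writes $+xy\,\partial_a\partial_b(z)$, a harmless slip since only the symmetry of the expression is used.)
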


\begin{proof}
Note that 
 \[
(x \triangleright_a y) \triangleright_b z=x\partial_a(y)\partial_b(z)=x\partial_b(z)\partial_a(y)=(x \triangleright_b z)\triangleright_a y,
 \]
so the last identity holds. Moreover, 
 \[
x \triangleright_a (y \triangleright_b z)=x\partial_a(y\partial_b(z))=x\partial_a(y)\partial_b(z)+xy\partial_a\partial_b(z),
 \]
so
 \[
(x \triangleright_a y) \triangleright_b z  - x \triangleright_a (y \triangleright_b z)= -xy\partial_a\partial_b(z),
 \]
which is symmetric in $x,y$ (since $R$ is commutative) and in $a,b$ (since the derivations $\partial_a$ commute between themselves), proving the first two identities. 
\end{proof}

If the set $A$ consists of one element $a$, these identities reduce to the identities 
\begin{gather*}
 (x \triangleright_a y) \triangleright_a z  - x \triangleright_a (y \triangleright_a z) = (y \triangleright_a x) \triangleright_a z  - y \triangleright_a (x \triangleright_a z), 
\\   (x \triangleright_a y) \triangleright_a z   = (x \triangleright_a z)\triangleright_a y,
\end{gather*}
which are the identities of the so-called (left) Novikov algebras \cite{BN85,DL,O92}. It is thus natural to give the following definition.

\begin{definition} \label{def_multi_novikov}
Let $A$ be a set. A \emph{(left) $A$-multi-Novikov algebra} is a vector space equipped with bilinear products $x,y\mapsto x\triangleright_a y$, $a\in A$, satisfying the identities
\begin{gather}
 (x \triangleright_a y) \triangleright_b z  - x \triangleright_a (y \triangleright_b z) = (y \triangleright_a x) \triangleright_b z  - y \triangleright_a (x \triangleright_b z)\label{eq:exchgen}, 
   \\ (x \triangleright_a y) \triangleright_b z  - x \triangleright_a (y \triangleright_b z) = (x\triangleright_b y) \triangleright_a z - x \triangleright_b (y \triangleright_a z)\label{eq:exchA},
\\   (x \triangleright_a y) \triangleright_b z   = (x \triangleright_b z)\triangleright_a y\label{eq:rcom},
\end{gather}
for all $ a,b\in A$.
\end{definition}

Note that the first two of the multi-Novikov identities imply the identity
 \[
(x \triangleright_a y) \triangleright_b z  - x \triangleright_a (y \triangleright_b z) = (y \triangleright_b x) \triangleright_a z  - y \triangleright_b (x \triangleright_a z),
 \]
which is the defining identity of left multi-pre-Lie algebras \cite[Proposition~4.21]{BCCH}. 

From Proposition~\ref{multi_Nov_example}, the vector space $\mathcal{M}^{\mathrm{gen}}$ equipped with the products $ (\triangleright_{{\bf{n}}})_{{\bf{n}} \in \mathbb{N}^{d+1}}  $ is a $A$-multi-Novikov algebra with $ A =  \mathbb{N}^{d+1}$.
Our main result on multi-indices is the following theorem.

\begin{theorem} \label{main_theorem_multi_indices}
The $A$-multi-Novikov algebra of populated general multi-indices is isomorphic to the free algebra generated by the set $\mfL^-$.
\end{theorem}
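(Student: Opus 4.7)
The plan is to build a multi-Novikov morphism from the free multi-Novikov algebra $\mathrm{Nov}(\mfL^-)$ to $\mathcal{M}^{\mathrm{gen}}$ and then prove it is an isomorphism. For each $\mfl\in\mfL^-$, let $e_\mfl\in\mathcal{M}^{\mathrm{gen}}$ denote the variable $z_{(\mfl,w_0)}$ associated to the empty commutative monomial $w_0\in M(\mathbb{N}^{d+1})$; since $\length{w_0}=0$, the populated condition $\sum (1-\length{w})\beta(\mfl,w)=1$ is satisfied, so $e_\mfl\in\mathcal{M}^{\mathrm{gen}}$. By the universal property of $\mathrm{Nov}(\mfL^-)$, the assignment $\mfl\mapsto e_\mfl$ extends uniquely to a multi-Novikov morphism $\phi\colon \mathrm{Nov}(\mfL^-)\to\mathcal{M}^{\mathrm{gen}}$.

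Next I would establish that $\phi$ is surjective. The derivation $D^{(\bn)}$ takes a single factor $z_{(\mfl,w)}$ (of weight $1-\length{w}$ in the populated count) to $z_{(\mfl,\bn w)}$ (of weight $-\length{w}$), so applying $D^{(\bn)}$ decreases the total populated weight by $1$; consequently, if $z^{\beta_1}$ and $z^{\beta_2}$ are populated then so is $z^{\beta_1}\triangleright_\bn z^{\beta_2}$, confirming that the populated subspace is stable. Conversely, given any populated monomial $z^\beta$ whose total degree in the variables $z_{(\mfl,w)}$ is at least two, pick a factor $z_{(\mfl,w)}$ with $w\neq w_0$, write $w=\bn\cdot w'$ in $M(\mathbb{N}^{d+1})$, and decompose $z^\beta$ up to a nonzero combinatorial scalar as $z^{\beta_1}\triangleright_\bn z^{\beta_2}$ with $\beta_2$ obtained by lowering one copy of $z_{(\mfl,w)}$ to $z_{(\mfl,w')}$ and $\beta_1$ carrying the remaining factors. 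A simple induction on the total length $\sum \length{w}\beta(\mfl,w)$ shows that the image of $\phi$ contains every populated monomial.

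For injectivity, the strategy is to interpret both sides as subalgebras of the same ambient object. By Theorem~\ref{th:freeNovikov} (to be proved in Section~\ref{sec::3}), the free multi-Novikov algebra on a set $X$ is isomorphic to the multi-Novikov subalgebra generated by $X$ inside the free commutative multi-differential algebra on $X$, i.e.\ the polynomial algebra in the variables $z_{(\mfl,w)}$, $(\mfl,w)\in X\times M(\mathbb{N}^{d+1})$, equipped with the pairwise commuting derivations $D^{(\bn)}$ from \eqref{derivations}. Taking $X=\mfL^-$, this ambient algebra is precisely the polynomial algebra carrying the multi-Novikov operations $\triangleright_\bn$ defined in \eqref{def_products}, and the subalgebra generated by the $e_\mfl$ coincides with $\mathcal{M}^{\mathrm{gen}}$ by the surjectivity step above together with the stability of populated multi-indices under the $\triangleright_\bn$. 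Hence $\phi$ is an isomorphism.

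The main obstacle is Theorem~\ref{th:freeNovikov} itself; the rest of the argument is essentially a bookkeeping matter of matching the algebraic description of general multi-indices with the generic construction of free multi-Novikov algebras. Per the roadmap in the introduction, that theorem is proved in Section~\ref{sec::3} by a two-sided dimension count: a lower bound coming from linearly independent images inside the free commutative multi-differential algebra (Corollary~\ref{lower_bound}), and a matching upper bound obtained by showing, via the multi-Novikov relations \eqref{eq:exchgen}--\eqref{eq:rcom}, that the quotient of the multi-magmatic algebra is spanned by cosets of normal ordered left-leaning products (Proposition~\ref{normal_coset}). Once these bounds coincide, the identification of $\mathrm{Nov}(\mfL^-)$ with $\mathcal{M}^{\mathrm{gen}}$ through $\phi$ is complete.
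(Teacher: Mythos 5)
Your overall architecture is the same as the paper's: you deduce the theorem from Theorem~\ref{th:freeNovikov} together with an identification of the multi-Novikov subalgebra of $\CDi_A\langle X\rangle$ generated by $X$ with the span of populated monomials, which is exactly the content of Proposition~\ref{prop:lowerbound} combined with the dictionary $X=\mfL^-$, $A=\mathbb{N}^{d+1}$, $(x,m)=z_{(\mfl,w)}$. The stability half of that identification is argued correctly, and your summary of how Theorem~\ref{th:freeNovikov} is proved is accurate. The problem is the generation half, i.e.\ your surjectivity induction, which as stated does not go through.

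Your decomposition step reads: pick a factor $z_{(\mfl,w)}$ with $w\neq w_0$, write $w=\bn w'$, and express $z^{\beta}$ up to a nonzero scalar as $z^{\beta_1}\triangleright_{\bn} z^{\beta_2}$ with $z^{\beta_2}$ the lowered factor and $z^{\beta_1}$ the remaining ones. The identity $z^{\beta_1}\triangleright_{\bn} z_{(\mfl,w')}=(w'(\bn)+1)\,z^{\beta}$ is true, but it only feeds the induction if both factors are themselves populated, and a weight count shows $z^{\beta_1}$ has weight $\length{w}$ while $z_{(\mfl,w')}$ has weight $2-\length{w}$, so both are populated only when $\length{w}=1$. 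Concretely, $u=z_{(\mfl,\bn\bn')}\,z_{(\mfl',w_0)}\,z_{(\mfl'',w_0)}$ is populated (weights $-1+1+1=1$), yet your recipe produces $z^{\beta_1}$ of weight $2$ and $z^{\beta_2}$ of weight $0$, neither of which is in the inductive class; and if you instead move more factors into $\beta_2$, the derivation $D^{(\bn)}$ generates extra monomials besides $z^{\beta}$ that you have not accounted for. This is exactly why the paper's proof of Proposition~\ref{prop:lowerbound} is not a one-line induction: when the chosen factor carries $d=\length{w}\ge 2$ derivatives, it exploits the populated condition to find enough bare generators $x_{i_1},x_{i_2},\dots$ inside the monomial, forms an auxiliary populated monomial containing $(x,m_1)$ with one derivative stripped off, and writes $u$ as $u_1\triangleright_{a_0}\bigl(x_{i_1}\cdots(x,m_1)\bigr)$ minus correction terms, each again of the form $(\text{populated})\triangleright_{a_0}x_{i_j}$ and of lower degree. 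You need this (or an equivalent device) to close the induction; the rest of your proposal is sound and coincides with the paper's route.
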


To prove this theorem, we shall undertake an intricate study of free multi-Novikov algebras. This is done in the following section.

\section{Free multi-Novikov algebras}
\label{sec::3}
The only source of multi-Novikov algebras that we have seen so far is given by Proposition \ref{multi_Nov_example}, which suggests to consider the following algebraic structure. 

\begin{definition}
Let $A$ be a set. A commutative multidifferential algebra is a vector space $R$ equipped with a commutative associative product and endomorphisms $\partial_a$, $a\in A$, which are pairwise commuting derivations of the product on $R$.
\end{definition}

Note that it is more common to use this structure to define a single binary operation on the different vector space $R\otimes\mathrm{Vect}(A)$, see, for instance \cite[Section~2.1]{Bur}; in our work, we use it to define many operations on the same vector space $R$.

Using this terminology, we may re-state Proposition \ref{multi_Nov_example} by saying that every commutative multidifferential algebra has a natural multi-Novikov structure, which we shall call the \emph{canonical multi-Novikov structure}. 

The following almost obvious proposition describes the free commutative multidifferential algebra generated by a set $X$, which we shall denote $\CDi_A\langle X\rangle$. For convenience of the reader, we recall that the notation $M(-)$ refers to multi-indices, that is finitely supported maps to $\mathbb{N}$.

\begin{proposition}
The algebra $\CDi_A\langle X\rangle$ is isomorphic to the polynomial algebra on the set of generators $X\times M(A)$ equipped with the derivations $\partial_a$ defined on generators as follows: for $(x,m)\in X\times M(A)$, we have
 \[
\partial_a(x,m)=(x,m_a),
 \]
where $m_a(a')=m(a')+\delta_{a,a'}$. In other words, the generator $(x,m)$ of the polynomial algebra is identified with the element $\prod_{a\in A}\partial_a^{m(a)}(x)$ of $\CDi_A\langle X\rangle$. 
\end{proposition}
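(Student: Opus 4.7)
The plan is to verify the two parts of the statement separately: first that the construction produces a well-defined commutative multidifferential algebra, and second that it enjoys the relevant universal property.

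First, I would begin by defining a polynomial algebra $P$ on the set of generators $X\times M(A)$, and, for each $a\in A$, extending the formula $\partial_a(x,m)=(x,m_a)$ from generators to all of $P$ by requiring the Leibniz rule: $\partial_a(fg)=\partial_a(f)g+f\partial_a(g)$. Since $P$ is a free commutative algebra on $X\times M(A)$, this extension is well-defined and uniquely determined. The commutativity of the $\partial_a$ then needs to be checked; on a generator $(x,m)$ we have $\partial_a\partial_b(x,m)=(x,m_{a,b})=\partial_b\partial_a(x,m)$ where $m_{a,b}(a')=m(a')+\delta_{a,a'}+\delta_{b,a'}$, and the equality on arbitrary polynomials follows by a straightforward induction on degree using the Leibniz rule and the commutativity already established on generators. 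This confirms that $P$ together with the derivations $\partial_a$ is a commutative multidifferential algebra.

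Next, I would establish the universal property. Let $R$ be any commutative multidifferential algebra with derivations $\delta_a$, $a\in A$, and let $f\colon X\to R$ be a set map. The morphism $\tilde f\colon P\to R$ must satisfy $\tilde f(x,\emptyset)=f(x)$ and must intertwine the derivations, i.e.\ $\tilde f(\partial_a(\xi))=\delta_a(\tilde f(\xi))$; together with multiplicativity this forces
\[
\tilde f(x,m) \;=\; \prod_{a\in A}\delta_a^{m(a)}(f(x)),
\]
where the product is taken in any order since the $\delta_a$ pairwise commute. This proves uniqueness. For existence, one defines $\tilde f$ by the above formula on generators and extends multiplicatively; commutativity and associativity of the product in $R$ make this well defined, and one needs to check that $\tilde f\circ\partial_a=\delta_a\circ\tilde f$ holds on arbitrary elements of $P$. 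On generators this is immediate from the commutativity of the $\delta_a$, and on products it follows by induction because both $\partial_a$ and $\delta_a$ obey the Leibniz rule and $\tilde f$ is a morphism of commutative algebras.

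There is essentially no hard obstacle here; the only place that requires mild care is the verification that $\tilde f$ intertwines the derivations, because the formula for $\tilde f$ on a generator $(x,m)$ is expressed through the $\delta_a$'s of $R$, and one must ensure the identification is compatible with the Leibniz rule on both sides. Both checks reduce to the fact that the derivations commute on each side, which is built into the definition of a multidifferential algebra.
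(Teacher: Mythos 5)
Your proof is correct. The paper itself labels this proposition ``almost obvious'' and supplies no proof at all, so there is nothing to compare against; your argument --- extending $\partial_a$ by the Leibniz rule from generators, checking the commutators vanish on generators, and then verifying the universal property by forcing $\tilde f(x,m)=\prod_{a\in A}\delta_a^{m(a)}(f(x))$ and checking that $\tilde f$ intertwines the derivations --- is exactly the standard verification the authors leave implicit, and it is complete.
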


The main result of this section is the following theorem that describes the free multi-Novikov algebra generated by a set $X$, which we shall denote $\Nov_A\langle X\rangle$. This theorem is a generalisation of \cite[Theorem~7.8]{DL}, and follows a similar strategy; however, given how intricate the proof is, we deemed important to give it with all the details.

\begin{theorem}\label{th:freeNovikov}
The algebra $\Nov_A\langle X\rangle$ is isomorphic to the multi-Novikov subalgebra of the free commutative multidifferential algebra $\CDi_A\langle X\rangle$ generated by $X$; here we consider $\CDi_A\langle X\rangle$ with its canonical multi-Novikov structure.
\end{theorem}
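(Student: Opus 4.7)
The plan is to prove Theorem~\ref{th:freeNovikov} by a dimension-count sandwich argument, matching a lower bound coming from a concrete model inside $\CDi_A\langle X\rangle$ with an upper bound coming from a normal-form description of $\Nov_A\langle X\rangle$.

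First I would build the natural map. By the universal property of $\Nov_A\langle X\rangle$, the inclusion $X\hookrightarrow \CDi_A\langle X\rangle$ together with the canonical multi-Novikov structure on $\CDi_A\langle X\rangle$ (cf.\ Proposition~\ref{multi_Nov_example}) extends uniquely to a multi-Novikov algebra morphism $\Phi\colon\Nov_A\langle X\rangle\to \CDi_A\langle X\rangle$. Its image is by construction the multi-Novikov subalgebra generated by $X$, so $\Phi$ surjects onto that subalgebra. Both sides carry a natural grading by the number of copies of generators from $X$, preserved by $\Phi$; hence it suffices to verify that the graded dimensions agree in every degree.

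The lower bound is obtained by working inside $\CDi_A\langle X\rangle$: Proposition~\ref{prop:lowerbound} exhibits a linearly independent family of elements in the image indexed by populated general multi-indices, and Corollary~\ref{lower_bound} extracts from this a concrete lower bound on the dimension of the degree $n$ component of $\Nov_A\langle X\rangle$. Linear independence is automatic on the target side, since in the polynomial algebra $\CDi_A\langle X\rangle$ distinct monomials in the generating variables $\prod_{a\in A}\partial_a^{m(a)}(x)$ are linearly independent.

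The upper bound is the heart of the argument and is obtained by rewriting. Realise $\Nov_A\langle X\rangle$ as the quotient of the free $A$-indexed multi-magmatic algebra on $X$ by the two-sided ideal generated by the defining identities \eqref{eq:exchgen}, \eqref{eq:exchA}, \eqref{eq:rcom}. One then shows that every coset modulo these relations admits a representative of a very restricted form, called a \emph{normal ordered left-leaning product}. Using the fact that \eqref{eq:exchgen} and \eqref{eq:exchA} together imply the multi-pre-Lie identity, one first reduces to left-leaning products $(\cdots((x_{i_1}\triangleright_{a_1}x_{i_2})\triangleright_{a_2}x_{i_3})\cdots)\triangleright_{a_{n-1}}x_{i_n}$ in the sense of Definition~\ref{leaning}. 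Next, relation \eqref{eq:rcom} is used to reorder the non-root factors $x_{i_2},\ldots,x_{i_n}$ into a fixed total order, producing the ordered form of Definition~\ref{ordered}. Finally, \eqref{eq:exchgen}-\eqref{eq:exchA} are applied jointly on factors and on decorations to place the $a_j$'s into the normal position of Definition~\ref{normal}. Proposition~\ref{normal_coset} will then assert that the cosets of these normal ordered left-leaning products span $\Nov_A\langle X\rangle$, and a direct count of them in each multi-degree matches the lower bound exactly, forcing $\Phi$ to be bijective degree by degree.

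The main obstacle will be Proposition~\ref{normal_coset}: establishing confluence of the three successive rewriting steps. Each step individually decreases a natural statistic (a right-depth for leaning, the number of order inversions among non-root factors for ordering, and a lexicographic invariant of the decoration word for normalisation), but the three procedures interact nontrivially: applying \eqref{eq:rcom} may disturb the decoration normalisation, while \eqref{eq:exchgen}-\eqref{eq:exchA} may disturb the factor ordering. One must therefore design a joint well-founded order on left-leaning monomials in which each of the three rewriting rules is strictly decreasing, and verify all critical pairs -- essentially the multi-indexed analogue of the diamond lemma argument used for \cite[Th.~7.8]{DL}. Once this combinatorial core is in place, the matching upper and lower bounds on graded dimensions force $\Phi$ to be an isomorphism, yielding the theorem.
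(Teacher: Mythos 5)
Your overall strategy coincides with the paper's: build the canonical surjection from $\Nov_A\langle X\rangle$ onto the multi-Novikov subalgebra of $\CDi_A\langle X\rangle$ generated by $X$, get the lower bound on graded dimensions from Proposition~\ref{prop:lowerbound} and Corollary~\ref{lower_bound}, get the matching upper bound from a spanning set of normal ordered left-leaning products in $\Mag_A\langle X\rangle/I$, and conclude by comparing dimensions degree by degree. However, two points in your write-up would derail the argument if taken literally.

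First, the formula you display for a left-leaning product, $(\cdots((x_{i_1}\triangleright_{a_1}x_{i_2})\triangleright_{a_2}x_{i_3})\cdots)\triangleright_{a_{n-1}}x_{i_n}$, is not what Definition~\ref{leaning} says, and these left-associated combs do \emph{not} span. In $\CDi_A\langle X\rangle$ such a comb evaluates to $x_{i_1}\partial_{a_1}(x_{i_2})\cdots\partial_{a_{n-1}}(x_{i_n})$, so every non-root factor carries exactly one derivative; already for $|A|=1$ and a single generator $x$ the populated monomial $x^2\partial_a^2(x)$ is unreachable, and your ``direct count'' would fall strictly short of the lower bound. The correct normal form is the recursive one of Definition~\ref{leaning}: a right-normed product $y_1\triangleright_{a_1}(y_2\triangleright_{a_2}(\cdots(y_n\triangleright_{a_n}z)\cdots))$ with $z,y_2,\ldots,y_n\in X$ and $y_1$ itself left-leaning, which under the rotation correspondence matches the populated monomials exactly. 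Relatedly, you have the roles of the identities scrambled: in the paper \eqref{eq:exchA} orders the decorations, \eqref{eq:exchgen} orders the arguments and later the leaves (the ``normal'' condition of Definition~\ref{normal} is a condition on the leaves, not on the $a_j$'s), and \eqref{eq:rcom} performs the structural exchange needed when $y_1\notin X$. Second, your insistence on confluence and critical-pair verification is unnecessary: linear independence of the normal forms is supplied for free by the lower bound, so one only needs the rewriting to produce a spanning set, which the paper achieves by a nested induction (on the degree, then on the length of the right-normed factorisation, then on the number of leaves, then on $n+m$) rather than by a diamond-lemma analysis. With the normal form corrected and the termination argument organised along these lines, your plan reproduces the paper's proof.
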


The proof of this theorem consists of several parts. First, let us describe the multi-Novikov subalgebra of the free commutative multidifferential algebra $\CDi_A\langle X\rangle$ generated by $X$ in a more concrete way. 

\begin{proposition}\label{prop:lowerbound}
The multi-Novikov subalgebra of the free commutative multidifferential algebra $\CDi_A\langle X\rangle$ generated by $X$ is spanned by all monomials 
$\prod_{x,m}(x,m)^{n_{x,m}}$ for which the ``populated condition'' holds:
 \[
\sum_{x,m} n_{x,m}\left(-1+\sum_{a\in A}m(a)\right)=-1.
 \]
\end{proposition}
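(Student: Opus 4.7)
Our plan is to prove both inclusions separately. For the direction $\subseteq$, we use a ``charge'' grading: define $c(P) := \sum_{x,m} n_{x,m}(-1 + |m|)$ for a monomial $P = \prod_{x,m}(x,m)^{n_{x,m}}$, where $|m| := \sum_{a \in A} m(a)$; this gives a decomposition $\CDi_A\langle X\rangle = \bigoplus_k R_k$ with each generator $x = (x, \mathbf{0})$ lying in $R_{-1}$. Since $\partial_a(x,m) = (x, m+e_a)$ increases $|m|$ by one, each $\partial_a$ shifts charge by $+1$, so the canonical product $f \triangleright_a g = f \cdot \partial_a g$ maps $R_k \otimes R_l$ into $R_{k+l+1}$. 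Iterating from generators in $R_{-1}$ keeps the result in $R_{-1}$, which is exactly the span of populated monomials, yielding the containment.

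For the reverse direction $\supseteq$, we induct on the multiset $\mu(P) := \{|m_1|, \ldots, |m_n|\}$ attached to a populated monomial $P = (x_1, m_1) \cdots (x_n, m_n)$, sorted in decreasing order and compared lexicographically. In the base case $K := \max_i |m_i| \leq 1$, populatedness forces exactly one factor $(x_1, \mathbf{0})$ with the remaining factors of the form $(x_j, e_{a_j})$, and a direct expansion yields
\[ P = x_1 \cdot \partial_{a_2} x_2 \cdots \partial_{a_n} x_n = (\cdots(x_1 \triangleright_{a_2} x_2) \triangleright_{a_3} x_3 \cdots) \triangleright_{a_n} x_n, \]
which lies in the subalgebra.

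The inductive step $K \geq 2$ rests on the identity
\[ f \triangleright_a (g \triangleright_b h) - (f \triangleright_a g) \triangleright_b h = f \cdot g \cdot \partial_a \partial_b h, \]
obtained by direct expansion of the canonical product. Pick $i$ with $|m_i| = K$ and write $m_i = m' + e_a + e_b$ with $|m'| = K - 2$. A simple counting argument from the populated condition shows that at least $K$ factors $(x_j, m_j)_{j \neq i}$ must satisfy $m_j = \mathbf{0}$. Choose $K - 2$ of them as $y_1, \ldots, y_{K-2}$ and set $h := (x_i, m') \cdot y_1 \cdots y_{K-2}$, a populated monomial of degree $K-1$ with $\mu(h) < \mu(P)$, hence in the subalgebra by induction. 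Split the remaining $n - K + 1$ factors into two populated subsets $f$ and $g$ (taking $f$ to be any leftover $(x_j, \mathbf{0})$), both in the subalgebra by induction. In the Leibniz expansion of $\partial_a \partial_b h$, the unique summand in which both derivatives land on $(x_i, m')$ reproduces $P$ after multiplication by $fg$; every other summand spreads at least one derivative onto some $y_l$, and the resulting populated monomial has $\mu$ strictly lex-smaller than $\mu(P)$, hence lies in the subalgebra by induction. Rearranging the identity places $P$ in the subalgebra.

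The main difficulty lies in the combinatorial bookkeeping at the inductive step: one must verify that the split of the $n-K+1$ remaining factors into two populated subsets $f$ and $g$ can always be carried out (which is ensured by the abundance of $(x_j, \mathbf{0})$ factors guaranteed by the populated condition) and that each Leibniz error term has $\mu$ strictly lex-smaller than $\mu(P)$ even when several factors of $P$ tie at the maximal value $|m| = K$; the latter works because the surviving $K$-sized entries in $fg$ are unchanged, while the unique $K$-entry contributed by the action on $(x_i, m')$ is degraded in every non-``(a)'' summand.
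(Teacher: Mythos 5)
Your argument is correct, and for the harder inclusion it takes a genuinely different route from the paper's. The containment of the subalgebra in the span of populated monomials is the same charge-grading observation in both proofs. For the reverse inclusion, the paper peels off \emph{one} derivative at a time: it isolates a variable $(x_0,m_0)$ of maximal order $d\ge 2$, bundles the degraded variable $(x_0,m_0-e_{a_0})$ together with enough bare generators $x_{i_1},\ldots$ into a monomial $v$, and uses the first-order identity $u_1\triangleright_{a_0}v=u+\sum_j(\cdots)\triangleright_{a_0}x_{i_j}$, inducting on the number of variables. You instead peel off \emph{two} derivatives at once via the symmetrised-associator identity $f\triangleright_a(g\triangleright_b h)-(f\triangleright_a g)\triangleright_b h=fg\,\partial_a\partial_b h$ and induct on the multiset of derivative orders. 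Your route isolates $P$ cleanly as the unique Leibniz summand in which both derivatives land on $(x_i,m')$, and it makes explicit that the pre-Lie associator is precisely the mechanism producing higher-order variables; the paper's route is slightly more economical, needing only a single product application per step and an induction on degree alone. The one point you should make explicit is the well-foundedness of your induction: the lexicographic order on decreasingly sorted multisets of varying cardinality needs a convention, but this is immediately repaired by inducting primarily on the degree $n$ (which strictly drops for $g$ and $h$ and is preserved by the Leibniz error terms) and secondarily on $\mu$, which for fixed $n$ ranges over a finite set because populatedness forces $\sum_i\lvert m_i\rvert=n-1$. All your counting claims (at least $K$ bare factors besides $(x_i,m_i)$, populatedness of $h$, $f$ and $g$, and the strict lexicographic drop of every error term even when several factors tie at the maximal order) check out.
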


\begin{proof}
We shall freely use the identification of generators $x\in X$ with $(x,0)$ in $\CDi_A\langle X\rangle$. Note that the populated condition holds for each generator $x\in X$: we have $m(a)=0$ for all $a\in A$ and $n_{x,m}=1$. Moreover, if we have two monomials $u,v$ for which the populated condition holds, then $u\triangleright_a v=u\partial_a(v)$ is a sum of monomials for which the populated condition holds as well, so the vector space spanned by such monomials is a multi-Novikov subalgebra. Let us show that this subalgebra is generated by~$X$. Let us take some monomial $u$ satisfying the populated condition. If for some $a\in A$ and some $x\in X$ we have $u=\partial_a(x)u_1$, we have $u=u_1\triangleright_a x$, and we may argue by induction on degree. Otherwise, $u$ is a product of a monomial in variables $X$ and a monomial in variables $(x,m)$ where $\sum_{a\in A}m(a)\ge 2$. Let us take one of the latter, say $(x_0,m_0)$ with $\sum_{a\in A}m_0(a)=d\ge 2$, and take some $a_0$ for which $m_0(a_0)\ne 0$. Because of the populated condition, the monomial $u$ is also divisible by a monomial of degree at least $d$ in variables from~$X$, say $u=x_{i_1}\cdots x_{i_d}(f_0,x_0)u_1$. Note that the populated condition is satisfied for~$u_1$.
Let us define $m_1\in M(A)$ by the formula $m_1(a)=m_0(a)-\delta_{a_0,a}$. Note that the populated condition is satisfied for $x_{i_1}\cdots x_{i_d}(x,m_1)$, and we have
\begin{multline}
u_1\triangleright_{a_0}(x_{i_1}\cdots x_{i_d}(x,m_1))=\partial_{a_0}(x_{i_1}\cdots x_{i_d}(x,m_1))u_1=\\ u+\sum_{j=1}^d (x_{i_1}\cdots x_{i_{j-1}} x_{i_{j+1}}\cdots x_{i_d}(x,m_1)u_1)\triangleright_{a_0}x_{i_j}
\end{multline}
and we once again may argue by induction on degree.
\end{proof}

 We shall use Proposition \ref{prop:lowerbound} to give a lower bound on the ``size'' of the free multi-Novikov algebra. To make sense of it, we define a $\mathbb{Z}\times \mathbb{N}$-grading on $\CDi_A\langle X\rangle$ by postulating that each variable $(x,m)$ has the grading 
 \[
\left(-1+\sum_{a\in A}m(a),1\right),
 \]   
and extending this grading to products of variables additively. If we assume the sets $A$ and $X$ finite, the bi-homogeneous components of $\CDi_A\langle X\rangle$ are finite-dimensional\footnote{We shall be making this assumption from now on; for the interested reader, we indicate that in the case where (say) $X$ is infinite, one may consider a finer grading by $\mathbb{N}^X$ instead of $\mathbb{N}$, which allows for all our proofs to proceed without any problem.}, and Proposition \ref{prop:lowerbound} immediately implies the following lower bound.

\begin{corollary} \label{lower_bound}
For each $n\in\mathbb{N}$ the dimension of the degree $n$ component of $\Nov_A\langle X\rangle$ is greater than or equal to the dimension of the $(-1,n)$-homogeneous component of $\CDi_A\langle X\rangle$.
\end{corollary}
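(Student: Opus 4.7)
The plan is to construct a natural multi-Novikov morphism from $\Nov_A\langle X\rangle$ into $\CDi_A\langle X\rangle$ and identify its image via Proposition~\ref{prop:lowerbound}. First, using Proposition~\ref{multi_Nov_example}, I would equip $\CDi_A\langle X\rangle$ with its canonical multi-Novikov structure coming from the pairwise commuting derivations $\partial_a$. The universal property of the free multi-Novikov algebra then produces a unique morphism of multi-Novikov algebras
\[
\varphi\colon\Nov_A\langle X\rangle\to\CDi_A\langle X\rangle
\]
sending each free generator $x\in X$ to the element $(x,0)\in\CDi_A\langle X\rangle$.

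Next, I would identify the image of $\varphi$. By construction, $\varphi(\Nov_A\langle X\rangle)$ is the multi-Novikov subalgebra of $\CDi_A\langle X\rangle$ generated by $X$, which by Proposition~\ref{prop:lowerbound} coincides with the linear span of all monomials $\prod_{x,m}(x,m)^{n_{x,m}}$ satisfying the populated condition
\[
\sum_{x,m}n_{x,m}\Bigl(-1+\sum_{a\in A}m(a)\Bigr)=-1.
\]
Recall the $\mathbb{Z}\times\mathbb{N}$-grading on $\CDi_A\langle X\rangle$ in which the variable $(x,m)$ carries bidegree $(-1+\sum_{a\in A}m(a),\,1)$. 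A direct reading shows that the bidegree of the monomial $\prod_{x,m}(x,m)^{n_{x,m}}$ equals $\bigl(\sum_{x,m}n_{x,m}(-1+\sum_{a\in A}m(a)),\,\sum_{x,m}n_{x,m}\bigr)$. Thus the populated condition is precisely the condition that the first coordinate equals $-1$, so the populated monomials of total $X$-degree $n$ form exactly a basis of the $(-1,n)$-homogeneous component of $\CDi_A\langle X\rangle$.

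Finally, since $\varphi$ respects the grading by the total number of generators from $X$, its restriction to the degree $n$ component of $\Nov_A\langle X\rangle$ surjects onto the $(-1,n)$-homogeneous component of $\CDi_A\langle X\rangle$, yielding the claimed dimension inequality. There is no genuine obstacle here once Proposition~\ref{prop:lowerbound} is available; the only point to verify carefully is that the populated condition translates exactly into the vanishing of the first bidegree up to the prescribed value $-1$, and this is an immediate bookkeeping check with the grading conventions.
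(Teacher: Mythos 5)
Your argument is correct and is exactly the reasoning the paper leaves implicit when it says Proposition~\ref{prop:lowerbound} ``immediately implies'' the corollary: the canonical morphism $\Nov_A\langle X\rangle\to\CDi_A\langle X\rangle$ has image the subalgebra generated by $X$, which is the span of populated monomials, and these of $X$-degree $n$ span precisely the $(-1,n)$-component. No difference in approach; your write-up just makes the bookkeeping explicit.
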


We shall now obtain the same upper bound for the dimension of the degree $n$ component of $\Nov_A\langle X\rangle$, which we shall use to prove Theorem \ref{th:freeNovikov}. We shall for the moment perform all calculations in the multi-magmatic algebra $\Mag_A\langle X\rangle$ which has bilinear operations $\triangleright_a$, $a\in A$, without any identities between them. This algebra has an obvious basis of monomials which are binary trees whose internal vertices are labelled by elements of $A$ and whose leaves are labelled by elements of $X$.

The algebra $\Mag_A\langle X\rangle$  has an ideal $I$ generated by all elements
\begin{gather}
 (x \triangleright_a y) \triangleright_b z  - x \triangleright_a (y \triangleright_b z) - (y \triangleright_a x) \triangleright_b z  + y \triangleright_a (x \triangleright_b z), 
   \\ (x \triangleright_a y) \triangleright_b z  - x \triangleright_a (y \triangleright_b z) - (x\triangleright_b y) \triangleright_a z + x \triangleright_b (y \triangleright_a z),
\\   (x \triangleright_a y) \triangleright_b z   - (x \triangleright_b z)\triangleright_a y,
\end{gather}
for all $ a,b\in A$ and all $x,y,z\in \Mag_A\langle X\rangle$. The quotient by this ideal is clearly the free multi-Novikov algebra $\Nov_A\langle X\rangle$, and we shall now obtain a set of elements that spans the quotient $\Mag_A\langle X\rangle/I$. 

Let $y_1,\ldots,y_{n+1}\in \Mag_A\langle X\rangle$ be some monomials. Recall that monomials $y_1\triangleright_{a_1} (y_2\triangleright_{a_2} (\cdots (y_n\triangleright_{a_n} y_{n+1}) \cdots ))$ are often referred to as right-normed products of $y_1,\ldots,y_{n+1}$. Each monomial in $\Mag_A\langle X\rangle/I$ can be uniquely written as a right-normed product $y_1\triangleright_{a_1} (y_2\triangleright_{a_2} (\cdots (y_n\triangleright_{a_n} z) \cdots ))$, where $z\in X$; for typographic purposes, we shall denote such product as $r(y_1,a_1,\ldots,y_n,a_n;z)$. 

\begin{definition} \label{leaning}
A right-normed product $r(y_1,a_1,\ldots,y_n,a_n;z)$ with $z\in X$ is said to be \emph{left-leaning} if $y_2,\ldots,y_n\in X$, and $y_{1}$ is either a generator or a monomial whose right-normed product decomposition is itself left-leaning. 
\end{definition}

Let us furthermore specify a subclass of left-leaning products. 

\begin{definition} \label{ordered}
Let us fix an order of the set $A$ and an order of the set $X$. We shall say that a left-leaning product $r(y_1,a_1,\ldots,y_n,a_n;z)$ is \emph{ordered} if $n=0$ (so that the whole product is just the generator $z$) or $n>0$, $a_1\le a_2\le\ldots\le a_n$, and $y_1\in X$ or $y_1=r(y_1',a_1',\ldots,y_m',a_m';z')$ is an ordered left-leaning product satisfying one of the two conditions:
\begin{itemize}
\item $n<m$,
\item $n=m$ and $z<z'$. 
\end{itemize} 
\end{definition}

\begin{proposition}\label{prop:ordright}
The quotient $\Mag_A\langle X\rangle/I$ is spanned by cosets of ordered left-leaning products. 
\end{proposition}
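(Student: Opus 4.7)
The plan is to show that every monomial of $\Mag_A\langle X\rangle$ is congruent modulo $I$ to a linear combination of ordered left-leaning products. Since every non-generator monomial $T$ has a unique presentation in right-normed form $r(y_1,a_1,\ldots,y_n,a_n;z)$ with $z \in X$, the task splits into two independent phases: (i) moving all complexity to the leftmost slot $y_1$ (left-leaning reduction), and (ii) sorting the indices and the nested left-leaning layer (ordering). Both phases are handled by an outer induction on the total degree of $T$.

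Phase (i). The key rewriting rule comes from rearranging \eqref{eq:exchgen}:
\[
y_1 \triangleright_{a_1}(y_2 \triangleright_{a_2} R) = (y_1 \triangleright_{a_1} y_2) \triangleright_{a_2} R - (y_2 \triangleright_{a_1} y_1) \triangleright_{a_2} R + y_2 \triangleright_{a_1}(y_1 \triangleright_{a_2} R).
\]
If $y_2$ is not a generator, both $(y_1 \triangleright_{a_1} y_2)$ and $(y_2 \triangleright_{a_1} y_1)$ sit in position 1, trading complexity from slot 2 to slot 1; the remaining term $y_2 \triangleright_{a_1}(y_1 \triangleright_{a_2} R)$ has complexity back in position 1 with the old $y_1$ pushed one step deeper into a strictly shorter tail. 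Iterating this identity from the outermost non-generator spine entry inward, and invoking the inductive hypothesis on the pieces of strictly smaller degree generated, eventually yields a linear combination of left-leaning monomials.

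Phase (ii). Assume $T = y_1 \triangleright_{a_1}(y_2 \triangleright_{a_2}(\cdots (y_n \triangleright_{a_n} z)))$ is left-leaning. Relation \eqref{eq:rcom} says exactly that the right-multiplication operators $R^a_y \colon x \mapsto x \triangleright_a y$ pairwise commute, so applying it "one level out'' lets us perform adjacent transpositions of pairs $(a_i,y_i),(a_{i+1},y_{i+1})$ along the spine while keeping the monomial left-leaning. A bubble-sort on the indices produces a representative with $a_1 \le \cdots \le a_n$; the leaves $y_2,\ldots,y_n$ are carried along automatically. To enforce the recursive condition comparing $y_1$ to the outer tail, one uses \eqref{eq:exchgen} to swap the two nested left-leaning blocks at the outermost spine junction, reducing to cases that are either already in the desired order or are of strictly smaller total degree after Phase (i) is reapplied to the correction terms.

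The main obstacle is termination: applied naively, the identities above preserve total degree and do not uniformly decrease any single measure of "non-normality.'' The resolution is to run the rewriting with respect to a lexicographic pair $(d,s)$, where $d$ is the maximal degree of a non-generator among $y_2,\ldots,y_n$ (measuring failure of left-leaning) and $s$ counts inversions among the index/length/leaf data along the spine (measuring disorder); every correction term in the identities above is either of strictly smaller total degree (handled by the outer induction) or of strictly smaller $(d,s)$ in this lexicographic order, and the procedure therefore halts on a linear combination of ordered left-leaning products, as required.
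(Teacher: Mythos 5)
Your overall architecture (outer induction on degree, first reduce to left-leaning form, then sort) matches the paper's, and your Phase (i) is essentially the paper's use of \eqref{eq:exchgen} to push the non-generator factors into the head position. But Phase (ii) contains a genuine gap: you claim that \eqref{eq:rcom} lets you transpose adjacent pairs $(a_i,y_i),(a_{i+1},y_{i+1})$ along the spine. It does not. The spine of a right-normed product is a composition of \emph{left} multiplications $x\mapsto y_i\triangleright_{a_i}x$, whereas \eqref{eq:rcom} commutes \emph{right} multiplications: it only applies to an expression whose outermost operation has a product in its first slot, i.e.\ when the head $y_1$ is itself non-trivial. Consequently your argument never actually achieves $a_1\le\cdots\le a_n$. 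The only identity that reorders the labels $a_i$ while keeping the $y_i$ in place is \eqref{eq:exchA}, which you never invoke; the paper uses it exactly for this, at the cost of correction terms with strictly shorter spine. Symmetrically, the recursive part of the ordered condition --- comparing the length of $y_1$'s own decomposition with $n$ --- is what \eqref{eq:rcom} is really for: writing $y_1=y_1'\triangleright_{b_1'}S$ one gets $u=(y_1'\triangleright_{b_1'}S)\triangleright_{a_1}R=(y_1'\triangleright_{a_1}R)\triangleright_{b_1'}S$, which exchanges the two spines exactly; identity \eqref{eq:exchgen}, which you propose for this step, only swaps $y_1$ with $y_2$ at the same level and cannot exchange the nested blocks.

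A second, smaller issue is termination. The correction terms produced by \eqref{eq:exchgen} and \eqref{eq:exchA}, such as $(y_1\triangleright_{a_1}y_2)\triangleright_{a_2}R$, have the \emph{same} total degree, so the outer induction on degree does not absorb them (your Phase (i) asserts they have ``strictly smaller degree'', which is false), and your lexicographic pair $(d,s)$ need not decrease on them either, e.g.\ $d$ is unchanged whenever some later $y_j$ is a non-generator of the same degree. The invariant that does strictly decrease for every correction term is the length $n$ of the right-normed decomposition, since two spine factors get merged into one; the paper runs a secondary induction on $n$ for fixed degree, and this is the missing ingredient that makes the rewriting halt.
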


\begin{proof}
Let us prove that in $\Mag_A\langle X\rangle/I$ every coset of a monomial $u$ of degree $d$ can be written as a linear combination of cosets of monomials of degree $d$ which are ordered left-leaning products. We shall argue by induction on $d$, and for any fixed $d$ by induction on $n$, the length of the right-normed product decomposition $r(y_1,a_1,\ldots,y_n,a_n;z)$ of our monomial~$u$. 

Clearly, Identity \eqref{eq:exchA} implies that we can put $a_1,\ldots,a_n$ in the right order at the cost of elements with same $d$ and smaller $n$ that appear (to which the induction hypothesis applies). Similarly, Identity \eqref{eq:exchgen} implies that we can put $y_1,\ldots,y_{n}$ in the decreasing order according to their degrees at the cost of elements with same $d$ and smaller $n$ that appear (to which the induction hypothesis applies). If the resulting element 
 \[
u'=r(z_1,b_1,\ldots,z_n,b_n;z)
 \]
satisfies $z_1\in X$, then the fact that the degrees of $z_1,\ldots,z_{n}$ are in the decreasing order implies that we have $z_i\in X$ for all $i$, and so $u'$ is an ordered right-leaning product. If $z_1\notin X$, then the induction hypothesis applies to it, and its coset contains a linear combination of ordered right-leaning products. Without loss of generality, we may take
 \[
z_1=r(z_1',b_1',\ldots,z_m',b_m';z')
 \]
with $z_2'$,\ldots, $z_m'\in X$. Using Identity \eqref{eq:rcom} for $a=b_1$ and $b=b_1'$, we see that in the quotient $\Mag_A\langle X\rangle/I$ we have  
 \[
u'=r(r(z_1',b_1,z_2,\ldots,z_n,b_n;z),b_1',z_2',\ldots,z_m',b_m';z')
 \]
Now we can use the induction hypothesis to write the monomial
 \[
r(z_1',b_1,z_2,\ldots,z_n,b_n;z) 
 \]
as a linear combination of ordered right-leaning products. To ensure the same for the monomial $u'$, we may have to use Identity \eqref{eq:rcom} once again, completing the proof.  
\end{proof}

We are now ready to describe what will be a basis of the free multi-Novikov algebra. For that, we shall first recall and use some standard combinatorial argument (Knuth's rotation correspondence \cite[Chapter~2.3]{Kn}) relating planar rooted binary trees and planar rooted trees. Let $T$ be a planar rooted binary tree with $n$ leaves. We define a planar rooted tree $\rho(T)$ with $n$ vertices as follows: for each internal vertex $i$ of $T$, consider the path going from $i$ to the rightmost leaf of the tree growing from $i$, and collapse that path to a single vertex. An example of this is displayed in Figure \ref{fig1}, where we use the same label for all the vertices of the planar binary tree collapsed to the same one by the map $\rho$.
\begin{figure}[h]
  \centering
\begin{tikzcd}[sep=small]
  &&& 6 && 2 \\
  5 && 4 && 2 \\
  & 4 &&&&&&&& 5 \\
  1 && 2 &&&&&& 1 & 4 & 6 \\
  & 2 && 3 &&&&&& 2 \\
  && 3 &&&&&&&& 3
  \arrow[no head, from=1-4, to=2-5]
  \arrow[no head, from=2-1, to=3-2]
  \arrow[no head, from=2-5, to=1-6]
  \arrow[no head, from=3-2, to=2-3]
  \arrow[no head, from=3-2, to=4-3]
  \arrow[no head, from=3-10, to=4-10]
  \arrow[no head, from=4-3, to=2-5]
  \arrow[no head, from=4-10, to=5-10]
  \arrow[no head, from=4-11, to=5-10]
  \arrow[no head, from=5-2, to=4-1]
  \arrow[no head, from=5-2, to=4-3]
  \arrow[no head, from=5-10, to=4-9]
  \arrow[no head, from=6-3, to=5-2]
  \arrow[no head, from=6-3, to=5-4]
  \arrow[no head, from=6-11, to=5-10]
\end{tikzcd}
\caption{Knuth's rotation correspondence}\label{fig1}
\end{figure} 

If $T$ is the underlying planar binary tree of a monomial $u\in \Mag_A\langle X\rangle$, we have a labelling of all internal vertices of $T$ by elements of the set $A$ and a labelling of all leaves of $T$ by elements of the set $X$. These labellings induce the labelling of vertices and edges of the planar tree $\rho(T)$: each vertex of $\rho(T)$ acquires the label of the leaf of $T$ which we collapsed onto that vertex, and each edge of $\rho(T)$ acquires the label of the vertex of $T$ for which that edge is the left incoming edge. An example of this is displayed in Figure \ref{fig2}.

\begin{figure}[h]
  \centering
\begin{tikzcd}[sep=small]
  &&& x && x \\
  y && x && a \\
  & b &&&&&&&& y \\
  x && b &&&&&& x & x & x \\
  & a && y &&&&&& x \\
  && b &&&&&&&& y
  \arrow[no head, from=1-4, to=2-5]
  \arrow[no head, from=2-1, to=3-2]
  \arrow[no head, from=2-5, to=1-6]
  \arrow[no head, from=3-2, to=2-3]
  \arrow[no head, from=3-2, to=4-3]
  \arrow["b"', no head, from=3-10, to=4-10]
  \arrow[no head, from=4-3, to=2-5]
  \arrow["b"', no head, from=4-10, to=5-10]
  \arrow["a"', no head, from=4-11, to=5-10]
  \arrow[no head, from=5-2, to=4-1]
  \arrow[no head, from=5-2, to=4-3]
  \arrow["a", no head, from=5-10, to=4-9]
  \arrow[no head, from=6-3, to=5-2]
  \arrow[no head, from=6-3, to=5-4]
  \arrow["b", no head, from=6-11, to=5-10]
\end{tikzcd}
\caption{Knuth's rotation correspondence with labels}\label{fig2}
\end{figure} 

We shall refer to an occurrence of a generator $x\in X$ in $u$ as a \emph{root} if it becomes a label of an internal vertex of $\rho(T)$, and a \emph{leaf} it it becomes a label of a leaf of $\rho(T)$. 

\begin{definition} \label{normal}
We shall call an ordered left-leaning product \emph{normal} if the sequence of its leaves, read from the left to the right, is in the increasing order with respect to the order of $X$.
\end{definition}

We are now ready to prove the main technical result of this section.

\begin{proposition} \label{normal_coset}
The quotient $\Mag_A\langle X\rangle/I$ is spanned by cosets of normal ordered left-leaning products. 
\end{proposition}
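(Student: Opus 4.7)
By Proposition \ref{prop:ordright}, the quotient $\Mag_A\langle X\rangle/I$ is already spanned by cosets of ordered left-leaning products, so the task reduces to showing that each such product is congruent modulo $I$ to a linear combination of normal ones. I plan to argue by induction on the lexicographic pair $\mu(u) = (\deg u, \mathrm{inv}(u))$, where $\mathrm{inv}(u)$ counts inversions in the left-to-right leaf sequence of $u$. Normal ordered left-leaning products are precisely those with $\mathrm{inv}(u) = 0$, and the outer induction on degree lets me assume that whenever $y_1$ in $u = r(y_1, a_1, \ldots, y_n, a_n; z)$ is itself a left-leaning product, it has already been normalised.

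The heart of the argument is the inductive step. Given an ordered left-leaning $u$ with $\mathrm{inv}(u)>0$, I fix an adjacent inversion $l > l'$ in its leaf sequence and aim to produce a congruence
\begin{equs}
u \equiv u' + \sum_\alpha c_\alpha w_\alpha \pmod{I},
\end{equs}
in which $u'$ is obtained from $u$ by swapping $l$ and $l'$ (so $\mu(u') < \mu(u)$), and every error term $w_\alpha$ satisfies $\mu(w_\alpha) < \mu(u)$. When the swapped leaves sit on the outer spine with equal labels $a_i = a_{i+1}$, identity \eqref{eq:exchgen} delivers the swap and introduces an error term with a compound $(y_i \triangleright_a y_{i+1} - y_{i+1} \triangleright_a y_i)$ at spine position $i$; when $a_i \neq a_{i+1}$, a combination of \eqref{eq:exchgen} and \eqref{eq:exchA} is needed. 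Since the compound destroys the left-leaning shape, I plan to invoke \eqref{eq:rcom} iteratively to migrate it leftwards until it is absorbed into a new $y_1$, yielding a left-leaning product whose outer spine is strictly shorter than that of $u$. Re-applying Proposition \ref{prop:ordright} then expresses each such monomial as a combination of ordered left-leaning products of the same degree. For inversions that straddle the boundary between the leaves of $y_1$ and the outer spine $y_2, \ldots, y_n$, the strategy is analogous, but I first apply \eqref{eq:rcom} at the top level to exchange the outer root $z$ with a root lying inside $y_1$, thereby transferring leaves across the boundary, before proceeding as above.

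The principal obstacle is the termination argument: after the \eqref{eq:rcom} migration and subsequent re-normalisation via Proposition \ref{prop:ordright}, each error term must decompose into ordered left-leaning products of strictly smaller measure than $u$. The intuition is that the strict shortening of the outer spine produced by the migration must dominate any new inversions that the re-normalisation reintroduces; making this precise is likely to require either a Diamond-Lemma style term-rewriting argument or a refinement of $\mu$ that also records the depth of $y_1$. Once termination is secured, the proposition follows, and combined with Corollary \ref{lower_bound} it will yield Theorem \ref{th:freeNovikov} by matching upper and lower bounds on dimensions.
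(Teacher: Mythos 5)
Your reduction to Proposition \ref{prop:ordright} and your overall plan (swap out-of-order leaves via \eqref{eq:exchgen}, migrate the resulting compounds with \eqref{eq:rcom}, control the error terms by induction) is the right skeleton, and is essentially the paper's. The genuine gap is exactly where you locate it yourself: the termination argument. Your measure $\mu(u)=(\deg u,\mathrm{inv}(u))$ does not work as stated, because after you absorb a compound $(y_i\triangleright_{a} y_{i+1}-y_{i+1}\triangleright_{a} y_i)$ into a new first factor and re-run Proposition \ref{prop:ordright}, there is no control on the inversion number of the resulting ordered left-leaning products; it can easily exceed $\mathrm{inv}(u)$. A Diamond-Lemma formulation would face the same obstruction, since confluence there also hinges on a well-founded order compatible with the rewriting.

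The missing idea is to take as the secondary induction parameter the number of \emph{leaves} of the monomial in the sense of the Knuth rotation correspondence (see the paragraph preceding Definition \ref{normal}), rather than the inversion number. Every error term created by an application of \eqref{eq:exchgen} contains a compound $y_i\triangleright_{a} y_{i+1}$ in argument position, which turns the occurrence of $y_{i+1}$ from a leaf into a root; hence all such error terms have strictly fewer leaves, and the reordering procedure of Proposition \ref{prop:ordright} does not increase the leaf count. This makes the induction on (degree, number of leaves) well founded for all error terms. One further refinement is still needed: when exchanging a leaf of $y_1$ with a leaf $y_2$ on the outer spine, the preliminary reordering inside $y_1=r(y_1',a_1',\ldots,y_m',a_m';z')$ produces one extra term with the \emph{same} number of leaves but with a shorter second-level right-normed factorisation; this is handled by a tertiary induction on $n+m$. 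Finally, note that \eqref{eq:exchgen} swaps the elements while leaving the labels attached to their positions on the spine, so orderedness of the $a_i$ is preserved automatically and your case distinction $a_i=a_{i+1}$ versus $a_i\neq a_{i+1}$ (with the extra appeal to \eqref{eq:exchA}) is not needed for the swapping step.
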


\begin{proof}
Thanks to the result of Proposition \ref{prop:ordright}, it is enough to prove that every ordered left-leaning product can be written as
a linear combination of cosets of normal ordered left-leaning products. 
Let us take an ordered left-leaning product 
 \[
u=r(y_1,a_1,\ldots,y_n,a_n;z) 
 \]
with $a_1\le a_2\le\ldots\le a_n$, $y_2,\ldots,y_n\in X$, and either $y_{1}\in X$ or $y_{1}=
r(y_1',a_1',\ldots,y_m',a_m';z')$ satisfies $n<m$ or $n=m$ and $z<z'$. We shall prove that in $\Mag_A\langle X\rangle/I$ it can be written as a linear combination of cosets of normal ordered left-leaning products by induction on $d$, and for a fixed $d$ by induction on $\ell$, the number of leaves, and for fixed $d$ and $\ell$ by the sum $n+m$.

Suppose first that $y_{1}\in X$, so that all the elements $y_1,\ldots,y_{n}$ are leaves. Using Identity \eqref{eq:exchgen}, we may put them in order at the cost of elements with fewer leaves that appear. We may apply the procedure of Proposition \ref{prop:ordright} (which does not increase the number of leaves) to these elements, and then the induction hypothesis finishes the argument. 

Suppose now that $y_1\notin X$, so that we have $y_{1}=r(y_1',a_1',\ldots,y_m',a_m';z')$ with $n<m$ or $n=m$ and $z<z'$. By the induction hypothesis we may assume that $y_{1}$ is a normal ordered left-leaning product. Also, modulo terms with fewer leaves, we may assume $y_2\le y_3\le\ldots\le y_n$. Suppose $y_m'> y_2$. If we show that these two elements can be exchanged modulo $I$, the proof will be finished after doing several exchanges like that. 

Using our relations, we can change the order from $r(y_1',a_1',\ldots,y_m',a_m';z')$ to $r(y_m',a_1',y_1'\ldots,y_{m-1}',a_m';z')$
at the cost of some elements with fewer leaves, and one more element $r((y_1'\triangleright a_1' y_m'),a_2,\ldots,y_{m-1}',a_m';z')$ with the same number of leaves. However, this element has fewer factors in the second level right-normed factorisation, and the induction on $n+m$ takes care of that. Thus, we may focus on the element
 \[
r(r(y_m',a_1',y_1'\ldots,y_{m-1}',a_m';z'),a_1,y_2,\ldots,y_n,a_n;z),
 \]
which, thanks to Identity \eqref{eq:rcom}, is equal to
 \[
r(r(y_m',a_1,y_2,\ldots,y_n,a_n;z),a_1',y_1'\ldots,y_{m-1}',a_m';z'),
 \]
Now, as before, we use Identity \eqref{eq:exchgen} to exchange in $r(y_m',a_1,y_2,\ldots,y_n,a_n;z)$ the elements $y_m'$ and $y_2$, and use Identity \eqref{eq:rcom} again to obtain
 \[
r(r(y_2,a_1',y_1'\ldots,y_{m-1}',a_m';z'),a_1,y_m',\ldots,y_n,a_n;z),
 \]
and an argument similar to before allows us to exchange $y_2$ and $y_{1}'$ in the inner monomial $r(y_2,a_1',y_1'\ldots,y_{m-1}',a_m';z')$, leading to a linear combination of normal ordered left-leaning products by the induction hypothesis.
\end{proof}

\begin{proof}[of Theorem \ref{th:freeNovikov}]
Let us show that normal ordered left-leaning products of degree $d$ are in bijection with the standard basis elements of the $(-1,d)$-homogeneous component of $\CDi_A\langle X\rangle$. The way to construct a basis element out of an ordered left-leaning product is quite straightforward: to each root vertex labelled by $x$ whose operations in the right-normed factorisation are $\triangleright_{a_1}$,\ldots, $\triangleright_{a_n}$ with $a_1\le a_2\le a_n$, we associate the variable $\partial_{a_1}\cdots\partial_{a_n}(x)$, and to each leaf labelled by $x$ we associate simply the variable $x$. The reverse reconstruction is straightforward (the ordered condition of the left-leaning product ensures uniqueness), and leads to a valid monomial in $\Mag_A\langle X\rangle$ thanks to the populated condition. 

Since dimensions of bi-homogeneous components of $\CDi_A\langle X\rangle$ are finite-dimensional, we see that the surjective map 
 \[
\Nov_A\langle X\rangle\twoheadrightarrow\CDi_A\langle X\rangle_{-1}
 \]
must be an isomorphism, as required.
\end{proof}

\begin{proof}[of Theorem \ref{main_theorem_multi_indices}]
\label{proof_multi}
We just have to realise that populated general multi-indices are exactly the  multi-Novikov subalgebra of the free commutative multidifferential algebra described in Proposition~\ref{prop:lowerbound}. We provide a dictionary between the two objects:
\begin{equs}
	X = \mfL^-, \quad A =\mathbb{N}^{d+1} \quad m = w, \quad x = \mfl, \quad (x,m) = z_{(\mathfrak{l},w)},\quad n_{x,m} = \beta(\mathfrak{l},w),
\end{equs}
where for $ m=w $, we identify the map $m$ that counts the occurence of the elements $ a $ with the word $ w $ in the commutative letters $a$. One has
\begin{equs}
	\sum_{a \in A} m(a) = \length{w}
\end{equs}
Then, one also has
\begin{equs}
	\prod_{x,m}(x,m)^{n_{x,m}} =  z^{\beta}  = \prod_{(\mfl,w) \in \mfL^-\times M(\mathbb{N}^{d+1})}   z_{(\mfl,w)}^{\beta(\mfl,w)}. 
\end{equs} 
and one has the same populated condition 
\[
\sum_{x,m} n_{x,m}\left(-1+\sum_{a\in A}m(a)\right) = \sum_{(\mfl,w)} (-1 + \length{w})\beta(\mfl,w) =-1.
\]
which allows to conclude the proof.
\end{proof}

\section{SPDE multi-indices}

\label{sec::4}

We shall now extend the context of Section \ref{sec::2} to include, in addition to the derivatives $\partial_{u^{(\bn)}}$, also the derivatives $\partial_{x_i}$ which are more complicated and computed via the chain rule
 \[
\partial_{x_i}=\sum_{\bn\in\mathbb{N}^{d+1}}(n_i+1)u^{(\bn+e_i)}\partial_{u^{(\bn)}}.
 \]

Together with the derivatives $\partial_{u^{(\bn)}}$, these satisfy the following relations:
\begin{equs} \label{relation_derivs}
	\begin{aligned}
\partial_{x_i}\partial_{x_j} &=\partial_{x_j}\partial_{x_i},\quad \partial_{u^{(\bn)}}\partial_{u^{(
		\mathbf{m})}}=\partial_{u^{(\mathbf{m})}}\partial_{u^{(\bn)}}, \\  \partial_{u^{(\bn)}}\partial_{x_i} & =\partial_{x_i}\partial_{u^{(\bn)}} + n_i \partial_{u^{(\bn-e_i)}},
	\end{aligned}
\end{equs}
where $e_i$ is the standard basis vector of $\mathbb{N}^{d+1}$ and $ \mathbf{n}, \mathbf{m} \in \mathbb{N}^{d+1} $.

Previously, we used multi-indices over $\mathbb{N}^{d+1}$, which we regarded as monomials in letters $\bn\in\mathbb{N}^{d+1}$. Now, we have monomials that use those letters but additionally letters $d_i$, $0\le i\le d$, but not all of these letters commute with each other. For that reason, we introduce an abstract associative algebra $\cA$ generated by all these symbols, and impose the relations 
\begin{equs} \label{relation_words}
d_id_j=d_jd_i,\quad \bn\mathbf{m}=\mathbf{m}\bn,\quad  d_i \bn = n_i(\bn-e_i) + \bn d_i.
\end{equs}
For the convenience of the reader, we emphasize that in the last equation $n_i$ is the $i$-th component of $\bn$, so that the corresponding term is missing if $n_i=0$, and for $n_i>0$, we have $\bn-e_i\in\mathbb{N}^{d+1}$, and we consider $(\bn-e_i)$ as a letter in the alphabet~$\mathbb{N}^{d+1}$. 
Since the last group of relations in this algebra replaces a monomial by a linear combination of two monomials, we are led to introduce a completely new type of multi-indices, which we shall call SPDE multi-indices. We shall use a set of formal variables $\coord=(z_{(\mfl,\alpha)})_{(\mfl,\alpha) \in \mfL^-\times\cA}$, which we consider to be linear in the argument $\alpha$, so that 
  \[
z_{(\mfl,c_1\alpha_1+c_2\alpha_2)}=c_1z_{(\mfl,\alpha_1)}+c_2z_{(\mfl,\alpha_2)}.
 \]
Each such variable $ z_{(\mfl,\alpha)} $ corresponds to $\mathrm{D}^{\alpha} a^\mfl(\mathbf{u}(x))$, where $\mathrm{D}^\alpha$ is obtained from $\alpha$ by replacing $d_i$ with $\partial_{x_i}$ and $\bn$ with $\partial_{u^{(\bn)}}$. Multi-indices $\beta$ over $\coord$  measure the frequency of the variables $  z_{(\mathfrak{l},\alpha)} $, so that we can represent them by monomials
\begin{equs}
	z^{\beta} : = \prod_{(\mfl,\alpha) \in \coord}  z_{(\mfl,\alpha)}^{\beta(\mfl,\alpha)}. 
\end{equs}
Note that Relations \eqref{relation_words} imply linear dependencies between the generators $z_{(\mfl,\alpha)}$ and hence between these monomials, so the right object to consider, which we shall refer to as the \emph{SPDE multi-indices} is the vector space spanned by all such monomials. Moreover, we are interested only in meaningful multi-indices called \emph{populated} satisfying the condition
\begin{equs}\label{populated}
	\sum_{(\mfl,\alpha)} (1 - \length{\alpha})\beta(\mfl,\alpha)  = 1.
\end{equs}
where  $\length{\alpha}$ is the number of letters $\bn\in\mathbb{N}^{d+1}$ in	$\alpha$. Note that while Relations \eqref{relation_words} imply some dependencies betweel the variables $z_{(\mfl,\alpha)}$, those relations are homogeneous with respect to the degree $\length{\alpha}$, so the populated condition is well defined, and we can define the vector space $\mathcal{M}_{\mathcal{R}}$ as the span of all populated SPDE multi-indices. 

\begin{remark} \label{remark_z_k}
Usually, the approach using multi-indices \cite{OSSW,LOT,BL23} suggests to encode iterated partial derivatives $\partial_{x_i}$ by monomials in another set of variables $z_{\bn}$, $\bn\in\mathbb{N}^{d+1}$, which correspond to the $u^{(\bn)}$. Our coding is different: we do not apply the chain rule, but rather look at iterated partial derivatives of the non-linearities $a^\mfl(\mathbf{u})$, which leads to  more compact formulas. For example, our multi-index $ z_{(\mathfrak{l},d_i)} $ corresponds to
	\begin{equs}
		\partial_i a^\mfl(\mathbf{u}(x)) = \sum_{\bn} u^{(\bn+e_i)}  \partial_{u^{(\bn)}}  a^\mfl(\mathbf{u}(x))
	\end{equs}
which would otherwise correspond to a more complicated multi-index: the sum of terms $ z_{\bn+e_i} z_{(\mathfrak{l},\bn)} $.
\end{remark}

Let us introduce a family of derivations on the space of SPDE multi-indices; we shall denote them $D^{(\bn)}$, $\bn \in \mathbb{N}^{d+1}$, and $\partial_i$, $0\le i\le d$. The derivation $D^{(\bn)}$ is the unique derivation that sends every variable $z_{(\mathfrak{l},\alpha)}$ to $z_{(\mathfrak{l},\bn\alpha)}$, and the derivation $\partial_{i}$ is the unique derivation that sends every variable $z_{(\mathfrak{l},\alpha)}$ to $z_{(\mathfrak{l},d_i\alpha)}$. Note that these derivations of course respect the linear relations between the variables $z_{(\mathfrak{l},\alpha)}$ coming from Relations \eqref{relation_words}. Moreover, these relations immediately imply that
\begin{equs} \label{non-commutation_2}
  D^{(\bn)} \partial_i  =\partial_i D^{(\bn)} + n_i D^{(\bn-e_i)}.
\end{equs}

Using the commuting derivations $D^{(\bn)}$, we can still define a family of products $\triangleright_\bn$ on the vector space of all SPDE  multi-indices by setting
\begin{equs}
z^{\gamma} \triangleright_\bn z^{\gamma'} = z^{\gamma} D^{(\bn)} (z^{\gamma'}). 
\end{equs}
It is immediate that each such product preserves the populated condition, and hence defines a multi-Novikov algebra structure on $\mathcal{M}_{\mathcal{R}}$. 

Our second main result on multi-indices is the following theorem for $A = \mathbb{N}^{d+1}$.

\begin{theorem} \label{second_main_theorem_multi_indices}
The $A$-multi-Novikov algebra of populated SPDE multi-indices is isomorphic to the free algebra generated by the set $\mathbb{N}^{d+1}\times \mfL^-$.
\end{theorem}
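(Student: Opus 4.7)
The plan is to reduce this to a ``linearised'' version of Theorem~\ref{main_theorem_multi_indices} obtained via Proposition~\ref{prop:extlin}. Set $A=\mathbb{N}^{d+1}$ and $V=\mathrm{Vect}(A)$, and let $\mathfrak{g}$ be the abelian $(d{+}1)$-dimensional Lie algebra with basis $\partial_0,\ldots,\partial_d$. Make $V$ into a $\mathfrak{g}$-module by
\[
\partial_i\cdot\bn = n_i(\bn-e_i),
\]
which is exactly the action of polynomial differentiation on the monomials $t^{\bn}=t_0^{n_0}\cdots t_d^{n_d}$. This formula is forced by the commutation relation \eqref{non-commutation_2}: it is precisely what is needed so that a multi-Novikov algebra endowed with derivations $\partial_i$ satisfying $\partial_iD^{(\bn)}-D^{(\bn)}\partial_i=n_iD^{(\bn-e_i)}$ becomes a multi-Novikov algebra with a $\mathfrak{g}$-action by generalised derivations in the sense of Definition~\ref{def_action_g}.

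First, verify that $(\mathcal{M}_{\mathcal{R}},(\triangleright_{\bn})_{\bn},(\partial_i)_i)$ is an example of this structure. The multi-Novikov identities for the products $\triangleright_{\bn}$ follow from Proposition~\ref{multi_Nov_example} (since the $D^{(\bn)}$ commute between themselves), and \eqref{non-commutation_2} is the required compatibility with the $\mathfrak{g}$-action on $V$. Next, apply Proposition~\ref{prop:extlin}: the free multi-Novikov algebra with a $\mathfrak{g}$-action by generalised derivations generated by $\mfL^-$ is isomorphic to the ordinary free multi-Novikov algebra generated by $U(\mathfrak{g})\otimes\mathrm{Vect}(\mfL^-)$. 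Since $\mathfrak{g}$ is abelian, $U(\mathfrak{g})$ is the polynomial algebra in $d+1$ commuting variables, with PBW basis indexed by $\mathbb{N}^{d+1}$. Hence the generating set of the resulting ordinary free multi-Novikov algebra is $\mathbb{N}^{d+1}\times\mfL^-$, as claimed.

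To conclude, one identifies $\mathcal{M}_{\mathcal{R}}$ with this free object. The algebra is generated by the variables $z_{(\mfl,1)}$, $\mfl\in\mfL^-$, as a multi-Novikov algebra with $\mathfrak{g}$-action: the derivations $\partial_i$ produce all $z_{(\mfl,\alpha)}$ from $z_{(\mfl,1)}$, and the products $\triangleright_{\bn}$ together with the Leibniz rule then produce all populated monomials, exactly as in the proof of Proposition~\ref{prop:lowerbound}. This gives the surjection from the free object onto $\mathcal{M}_{\mathcal{R}}$. For injectivity, one proceeds by graded dimension count: the $(-1,n)$-component of the free commutative multidifferential algebra on $\mathbb{N}^{d+1}\times\mfL^-$ is identified with the span of populated SPDE multi-indices of degree $n$ via Theorem~\ref{th:freeNovikov}, and the upper bound from the Proposition~\ref{normal_coset}-style spanning set matches the lower bound given by the explicit populated monomial basis.

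The main obstacle is the injectivity in the last step: one must show that Relations \eqref{relation_words} impose no linear dependencies on $\mathcal{M}_{\mathcal{R}}$ beyond those built into the $\mathfrak{g}$-action, i.e., that $\mathcal{M}_{\mathcal{R}}$ is a free $U(\mathfrak{g})$-module on the populated general multi-indices of Section~\ref{sec::2}. Concretely, one needs a PBW-style normal form: every $z_{(\mfl,\alpha)}$ can be rewritten uniquely as a $U(\mathfrak{g})$-combination of variables indexed by words in $\mathbb{N}^{d+1}$ only, by successively using $d_i\bn=n_i(\bn-e_i)+\bn d_i$ to push the $d_i$'s to the left. Once this normal form is established, the combination of Theorem~\ref{main_theorem_multi_indices} (for general multi-indices), Proposition~\ref{prop:extlin} (for passage to the $\mathfrak{g}$-linearised setting) and Theorem~\ref{th:freeNovikov} (for the free multi-Novikov object on $\mathbb{N}^{d+1}\times\mfL^-$) fit together functorially to yield the isomorphism.
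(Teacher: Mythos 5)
Your proposal is correct and follows essentially the same route as the paper: identify $V=\mathrm{Vect}(\mathbb{N}^{d+1})$ with polynomials acted on by the abelian Lie algebra $\mathfrak{g}=\langle\partial_0,\dots,\partial_d\rangle$, recognise \eqref{non-commutation_2} as an instance of \eqref{eq:der-distr}, and apply Proposition~\ref{prop:extlin} together with Theorem~\ref{th:freeNovikov} and functoriality to land on the generating set $U(\mathfrak{g})\otimes\mathrm{Vect}(\mfL^-)\cong\mathrm{Vect}(\mathbb{N}^{d+1}\times\mfL^-)$. If anything, you are more explicit than the paper on the one point it leaves implicit, namely the PBW-style normal form for the relations \eqref{relation_words} needed to identify $\mathcal{M}_{\mathcal{R}}$ with the free $\mathfrak{g}$-extended object.
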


This theorem will rely on a general result on extended multi-Novikov algebras which will be proved in the next section.

\section{Extended free multi-Novikov algebras}
\label{sec::5}
Let us consider some type of algebras $\cP_A$ with operations indexed by a set $A$. We may consider a linearised type of algebras $\cP_A^{\mathrm{lin}}$, where instead of operations $f_a$ indexed by $a\in A$ we consider operations $f_v$ indexed by $v\in\mathrm{Vect}(A)$, where we postulate that the operations $f_v$ satisfy the exact same identities as the operations $f_a$ and additionally require $f_{\lambda_1a_1+\lambda_2a_2}=\lambda_1f_{a_1}+\lambda_2f_{a_2}$. One of the most famous examples of this sort concerns linearly compatible Lie brackets. Recall that an algebra with linearly compatible Lie brackets \cite{DK} is a vector space with anti-commutative operations $x,y\mapsto \{x,y\}_a$, $a\in A$, for which each linear combination $\lambda\{x,y\}_{a}+\mu\{x,y\}_{b}$ is a Lie bracket. 

\begin{proposition}
The class of algebras with several compatible Lie brackets can be alternatively described as the class of algebras with anti-commutative operations $x,y\mapsto \{x,y\}_a$, $a\in A$, such that each operation $\{x,y\}_{a}$ is a Lie bracket and those Lie brackets satisfy, for all $a\ne b\in A$, the following generalization of the Jacobi identity:
\begin{multline*}
\{\{x,y\}_a,z\}_b+\{\{y,z\}_a,x\}_b+\{\{z,x\}_a,y\}_b\\ + \{\{x,y\}_b,z\}_a+\{\{y,z\}_b,x\}_a+\{\{z,x\}_b,y\}_a=0.  
\end{multline*}
\end{proposition}

\begin{proof}
The Jacobi identity for the operation $\lambda\{x,y\}_{a}+\mu\{x,y\}_{b}$ is a polynomial of degree two in $\lambda,\mu$. The coefficient of $\lambda^2$ is the Jacobi identity for $\{x,y\}_{a}$, the coefficient of $\mu^2$ is the Jacobi identity for $\{x,y\}_{b}$, and the coefficient of $\lambda\mu$ is precisely the mixed identity above. 
\end{proof}

Thus, if $\cP_A$ stands for vector spaces equipped with several Lie brackets indexed by $A$, $\cP_A^{\mathrm{lin}}$ imposes the above mixed identities between different brackets. For various multi-algebras, the situation is different.

\begin{proposition} \label{linearised_Novikov}
For the class of multi-pre-Lie algebras, or multi-Novikov algebras, and multidifferential algebras the linearised version is the same class of algebras.  
\end{proposition}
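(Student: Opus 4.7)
The plan is to observe that in each of the three classes of algebras mentioned, every defining identity is \emph{multilinear in the operation indices}, in the sense that each symbol $a$, $b$, \ldots\ labelling an operation appears exactly once in each monomial on both sides of every identity. This single observation is the technical crux; everything else is bookkeeping.

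Concretely, for multi-pre-Lie algebras the identity $(x \triangleright_a y) \triangleright_b z - x \triangleright_a (y \triangleright_b z) = (y \triangleright_b x) \triangleright_a z - y \triangleright_b (x \triangleright_a z)$ has each of $a,b$ appearing once in each of its four terms. The multi-Novikov identities \eqref{eq:exchgen}, \eqref{eq:exchA}, \eqref{eq:rcom} enjoy the same feature. For multidifferential algebras, the commutativity relations $\partial_a\partial_b=\partial_b\partial_a$ and the Leibniz rule $\partial_a(xy)=\partial_a(x)y+x\partial_a(y)$ also have this multilinearity property (the associative commutative product carries no operation index at all).

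Given an algebra of one of these types with operations indexed by $A$, I would define, for $v=\sum_i \lambda_i a_i\in\mathrm{Vect}(A)$, the extended operation $f_v:=\sum_i\lambda_i f_{a_i}$; this is well defined by freeness. Because each defining identity is multilinear in the operation indices, substituting linear combinations for the operation labels in an identity yields the same identity evaluated on arbitrary vectors of $\mathrm{Vect}(A)$, so all the identities are automatically inherited by the $f_v$. Conversely, any algebra of the linearised type restricts to an algebra of the original type by evaluating only on $v\in A\subset\mathrm{Vect}(A)$. These two constructions are mutually inverse, yielding the desired identification of the two classes.

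I do not expect any genuine obstacle; the argument is essentially a remark. The only point that deserves care is to check that no \emph{additional} identities are forced on the $f_v$ when one passes to $\mathrm{Vect}(A)$: this is precisely guaranteed by the fact that the identities are multilinear in the operation indices, so that their validity on a basis forces their validity on arbitrary linear combinations, without creating new constraints.
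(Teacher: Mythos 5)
Your proposal is correct and follows essentially the same route as the paper: the paper's proof likewise observes that each defining relation $R(x,y,z,a,b)$ is multilinear in the operation indices, expands $R(x,y,z,\lambda_1a_1+\lambda_2a_2,\mu_1b_1+\mu_2b_2)$ into a linear combination of instances of $R$ on basis indices, and concludes that no new identities arise. Your explicit check of the converse direction and of the multidifferential case is just the same observation spelled out slightly more fully.
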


\begin{proof}
Let us consider, for example, the defining identity
 \[
 R(x,y,z,a,b):=(x \triangleright_a y) \triangleright_b z  - x \triangleright_a (y \triangleright_b z) - (y \triangleright_a x) \triangleright_b z  + y \triangleright_a (x \triangleright_b z)=0
 \]
for multi-pre-Lie algebras. For all $a_1,a_2,b_1,b_2\in A$ and all $\lambda_1,\lambda_2,\mu_1,\mu_2$, we have
\begin{multline}
R(x,y,z,\lambda_1a_1+\lambda_2a_2,\mu_1b_1+\mu_2b_2)=\\
\lambda_1\mu_1R(x,y,z,a_1,b_1)+\lambda_1\mu_2R(x,y,z,a_1,b_2)+\\
\lambda_2\mu_1R(x,y,z,a_2,b_1)+\lambda_2\mu_2R(x,y,z,a_2,b_2)=0,
\end{multline}
so no new identities arise.
\end{proof}

Suppose now that the vector space $V=\mathrm{Vect}(A)$ carries a representation of a Lie algebra $\mathfrak{g}$. 

\begin{definition} \label{def_action_g}
The class of \emph{$\mathfrak{g}$-extended $\cP_A^{\mathrm{lin}}$-algebras} has the same operations as $\cP_A^{\mathrm{lin}}$-algebras and, additionally, unary operations $\alpha_g$, $g\in\mathfrak{g}$ that satisfy the commutation relations $\alpha_g\alpha_h-\alpha_h\alpha_g=\alpha_{[g,h]}$ of the Lie algebra $\mathfrak{g}$ and the identities 
 \[
\alpha_g f_v(x_1,\ldots,x_n)=\sum_{i=1}^n f_v(x_1,\ldots,x_{i-1},\alpha_g(x_i),x_{i+1},\ldots,x_n)+f_{g(v)}(x_1,\ldots,x_n)
 \]
for each structure operation $f_v$, $v\in V$, with $n$ arguments. In other words, elements of $\mathfrak{g}$ act like generalised derivations, acting both on arguments and on operations. 
\end{definition}

This notion corresponds to semi-direct product extensions in the operad theory that goes back to \cite{Markl} in the topological context and to \cite{Bel} in the linear context.

\begin{proposition}\label{prop:extlin}
As a $\cP_A^{\mathrm{lin}}$-algebra, the free $\mathfrak{g}$-extended $\cP_A^{\mathrm{lin}}$-algebra generated by a vector space $W$ is isomorphic to the free algebra generated by $U(\mathfrak{g})\otimes W$, the free $\mathfrak{g}$-module on $W$.  
\end{proposition}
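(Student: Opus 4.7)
The plan is to construct mutually inverse morphisms between the two algebras using their universal properties. Let $F(W)$ denote the free $\mfg$-extended $\cP_A^{\mathrm{lin}}$-algebra generated by $W$, and let $G$ denote the free $\cP_A^{\mathrm{lin}}$-algebra generated by the $\mfg$-module $U(\mfg) \otimes W$. The construction of a $\cP_A^{\mathrm{lin}}$-homomorphism $\Psi\colon G \to F(W)$ is the easy direction: because the operators $\alpha_g$ on $F(W)$ satisfy the Lie bracket relations of $\mfg$, they extend by composition to operators $\alpha_u$ for all $u \in U(\mfg)$, and we set $\Psi(u \otimes w) := \alpha_u(w)$ on generators, extending uniquely by the universal property of $G$. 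The hard direction will be to construct an inverse map $\Phi\colon F(W) \to G$.

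For $\Phi$, the key step is to equip $G$ with the structure of a $\mfg$-extended $\cP_A^{\mathrm{lin}}$-algebra. On generators the action is dictated by $g \cdot (u \otimes w) := (gu) \otimes w$, and we extend it to $G$ by the generalised derivation rule of Definition~\ref{def_action_g}, forcing
\[
g(f_v(x_1, \ldots, x_n)) := \sum_{i=1}^{n} f_v(x_1, \ldots, g(x_i), \ldots, x_n) + f_{g(v)}(x_1, \ldots, x_n).
\]
Once this extension is shown to be a well-defined action of $\mfg$ by generalised derivations, the universal property of $F(W)$ applied to the map $w \mapsto 1 \otimes w$ produces a $\mfg$-equivariant $\cP_A^{\mathrm{lin}}$-homomorphism $\Phi\colon F(W) \to G$. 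Checking that $\Phi$ and $\Psi$ are mutually inverse then reduces to a comparison on generators: on $1 \otimes w$, the composition $\Phi \circ \Psi$ is the identity by construction, and $\Psi \circ \Phi$ is the identity on $W \subset F(W)$ and commutes with the $\alpha_g$, hence is the identity on the whole of $F(W)$, which is generated by $W$ under the operations and the $\alpha_g$.

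The main obstacle is verifying the well-definedness of the $\mfg$-action on $G$, which splits into two consistency checks. First, one must show that the derivation rule descends to a well-defined endomorphism of $G$, i.e.\ respects the multilinear identities defining $\cP_A^{\mathrm{lin}}$. This uses the fact that any such identity $I(x_1, \ldots, x_m) = 0$ is multilinear in the arguments and linear in the labels $v$ of the operations $f_v$, so applying $g$ via the derivation rule yields a sum of terms that are again instances of $I$ (with one argument replaced by $g(x_i)$, or one label $v$ replaced by $g(v)$), and hence vanish. Here it is essential that $\cP_A^{\mathrm{lin}}$-algebras satisfy exactly the linearised identities, as noted in Proposition~\ref{linearised_Novikov}. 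Second, one must verify the Lie algebra relation $\alpha_g \alpha_h - \alpha_h \alpha_g = \alpha_{[g,h]}$ on $G$; both sides act as generalised derivations (with respect to the $\mfg$-action $v \mapsto [g,h](v)$ on labels) and agree on the generators $u \otimes w$ by the associativity of multiplication in $U(\mfg)$, so they coincide on all of $G$ by induction on the depth of the expression. Once these two checks are in place, the proof is complete.
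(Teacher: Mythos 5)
Your proof is correct and follows essentially the same route as the paper: the heart of both arguments is the same pair of compatibility checks, namely that the generalised-derivation rule respects the (multilinear, label-linear) identities of $\cP_A^{\mathrm{lin}}$ and that it satisfies the Lie relation $\alpha_g\alpha_h-\alpha_h\alpha_g=\alpha_{[g,h]}$. You merely package the paper's ``spanning plus no extra relations'' argument more formally, by using those two checks to endow the free $\cP_A^{\mathrm{lin}}$-algebra on $U(\mathfrak{g})\otimes W$ with a $\mathfrak{g}$-extended structure and then producing mutually inverse morphisms from the universal properties.
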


\begin{proof}
Clearly, the identities
\begin{equation}\label{eq:der-distr}
\alpha_g f_v(x_1,\ldots,x_n)=\sum_{i=1}^n f_v(x_1,\ldots,x_{i-1},\alpha_g(x_i),x_{i+1},\ldots,x_n)+f_{g(v)}(x_1,\ldots,x_n)
\end{equation}
imply, by induction on the number of generators from $W$ involved, that every element of the free $\mathfrak{g}$-extended $\cP_A^{\mathrm{lin}}$-algebra generated by $W$ can be written as a combination of operations of elements of $\cP_A^{\mathrm{lin}}$ evaluated on elements from $U(\mathfrak{g})\otimes W$, and we should ensure some compatibilities between our relations to ensure that these elements are linearly independent.

There are two things to check. First, we should check that Identities \eqref{eq:der-distr} are compatible with $\alpha_g\alpha_h-\alpha_h\alpha_g=\alpha_{[g,h]}$. That is clear: computing $\alpha_g\alpha_h f_v(x_1,\ldots,x_n)$ gives us terms where $g$ and $h$ are both applied in the same place (one of the arguments $x_i$ or $v$) or at different places, and subtracting $\alpha_h\alpha_g f_v(x_1,\ldots,x_n)$ has the effect that all terms where they are applied at different places disappear.
Second, we should check that Identities \eqref{eq:der-distr} are compatible with the identities of $\cP_A^{\mathrm{lin}}$-algebras. That is also clear, since applying $\alpha_g$ to such an identity produces a sum of terms which assemble into the same identity where $g$ is applied to either one operation or one of the arguments (as $\alpha_g$).
\end{proof}

\begin{proof}[of Theorem \ref{second_main_theorem_multi_indices}]
Let us consider $A=\mathbb{N}^{d+1}$, the vector space $V=\mathrm{Vect}(A)$ is identified with the polynomial algebra in $d+1$ variables which has derivations $\partial_0$, \ldots, $\partial_d$, making $V$ a module over the $(d+1)$-dimensional abelian Lie algebra $\mathfrak{g}$. In particular, Relations \eqref{non-commutation_2} are a particular case of Relation \eqref{eq:der-distr} for $n=1$ and $f_v=D^{(\bn)}$. Thus, analogously to what we saw in the proof of Theorem \ref{main_theorem_multi_indices}, the free $\mathfrak{g}$-extended $\CDi_A$-algebra generated by the set $\mfL^-$ is precisely the algebra of all SPDE multi-indices equipped with the commutative product and the operators $D^{(\bn)}$, $\bn\in \mathbb{N}^{d+1}$ and $\partial_i$, $0\le i\le d$.  

According to Proposition \ref{prop:extlin}, as a $\CDi_A$-algebra, the free $\mathfrak{g}$-extended $\CDi_A$-algebra generated by a vector space $W$ is isomorphic to the free algebra generated by $U(\mathfrak{g})\otimes W$. Since both the canonical multi-Novikov structure and the construction of extended algebras are functorial, Theorem \ref{th:freeNovikov} implies that, as a Novikov algebra, the free extended multi-Novikov algebra generated by a set $X$ is isomorphic to the free algebra generated by the vector space $U(\mathfrak{g})\otimes W$, where $W=\mathrm{Vect}(X)$. It remains to note that since $\mathfrak{g}$ is abelian, the vector space $U(\mathfrak{g})\otimes W$ has a combinatorial basis $\mathbb{N}^{d+1}\times X$, and setting $X=\mfL^-$, we obtain the required statement. 
\end{proof}

For the reader interested in applications, we extract from the above proof a more concrete version of the statement of Theorem \ref{second_main_theorem_multi_indices}: the generating set $\mathbb{N}^{d+1}\times \mfL^-$ from the statement of this theorem 
is obtained from the set $\mfL^-$ (corresponding to the non-linearities $a^\mfl(\mathbf{u})$) by applying monomials in partial derivatives $\partial_{x_0},\ldots,\partial_{x_{d}}$.

\section{Connection to decorated trees}
\label{sec::6}

Expansion of solutions of \eqref{set01} were first described with a B-series formalism in \cite{BCCH} using the decorated trees formalism introduced in \cite{reg,BHZ}. Decorated trees are used for encoding two different analytical/stochastic objects: elementary differentials, Taylor coefficients of the local expansion of the solution and recentered stochastic iterated integrals. In this section, we introduce a suitable set of decorated trees convenient for this purpose.
We start by considering planar decorated trees from 
\cite[Section 4]{BK23}. They are recursively defined by
\begin{equs}
	\mathfrak{T} = \Big\{  (\prod_{i} A_i) \Xi_{\mathfrak{l}}, \, \mfl \in \mfL^-, \, A_i \in  \{ \CI_a(\tau), \, \tau \in \mathfrak{T}, \, a\in \mathbb{N}^{d+1} \}  \cup \{ X_{i} \}_{i = 0,..., d} \Big\}.
\end{equs}
where 
\begin{itemize}
	\item $ \Xi_{\mathfrak{l}} $ is a noise type edge.
	\item $ \CI_a(\tau) $ corresponds to the grafting of the decorated tree $ \tau $ onto a new root with an edge decorated by $ a \in \mathbb{N}^{d+1} $. The root has no decorations.
	\item the $ X_i $ are monomial type edges.
	\item the product $ \Pi_i $ is not commutative and therefore $ \mathfrak{T}  $ is formed of planar decorated trees.
\end{itemize} 

Below, we provide an example of such decorated trees:
\begin{equs}
	\CI_a(\Xi_{\mathfrak{l}_2}) X_i  \Xi_{\mathfrak{l}_1}	= 	\begin{tikzpicture}[scale=0.2,baseline=0.1cm]
		\node at (0,0)  [dot,label= {[label distance=-0.2em]below: \scriptsize  $     $} ] (root) {};
		\node at (0,5)  [dot,label= {[label distance=-0.2em]above: \scriptsize  $     $} ] (center) {};
		\node at (-3,4)  [dot,label={[label distance=-0.2em]above: \scriptsize  $  $}] (right) {};
		\node at (-3,8)  [dot,label={[label distance=-0.2em]above: \scriptsize  $  $}] (rightc) {};
		\node at (3,4)  [dot,label={[label distance=-0.2em]above: \scriptsize  $ $} ] (left) {};
		\draw[kernel1] (rightc) to
		node [sloped,below] {\small }     (right);
		\draw[kernel1] (right) to
		node [sloped,below] {\small }     (root); 
		\draw[kernel1] (center) to
		node [sloped,below] {\small }     (root); 
		\draw[kernel1] (left) to
		node [sloped,below] {\small }     (root);
		\node at (2,2) [fill=white,label={[label distance=0em]center: \scriptsize  $ \Xi_{\mathfrak{l}_1} $} ] () {};
		\node at (-3,6) [fill=white,label={[label distance=0em]center: \scriptsize  $ \Xi_{\mathfrak{l}_2} $} ] () {};
		\node at (-2,2) [fill=white,label={[label distance=0em]center: \scriptsize  $ a $} ] () {};
		\node at (0,2.5) [fill=white,label={[label distance=0em]center: \scriptsize  $ X_i $} ] () {};
	\end{tikzpicture} \neq 	\begin{tikzpicture}[scale=0.2,baseline=0.1cm]
		\node at (0,0)  [dot,label= {[label distance=-0.2em]below: \scriptsize  $     $} ] (root) {};
		\node at (0,5)  [dot,label= {[label distance=-0.2em]above: \scriptsize  $     $} ] (center) {};
		\node at (3,4)  [dot,label={[label distance=-0.2em]above: \scriptsize  $  $}] (right) {};
		\node at (0,9)  [dot,label={[label distance=-0.2em]above: \scriptsize  $  $}] (rightc) {};
		\node at (-3,4)  [dot,label={[label distance=-0.2em]above: \scriptsize  $ $} ] (left) {};
		\draw[kernel1] (right) to
		node [sloped,below] {\small }     (root); 
		\draw[kernel1] (center) to
		node [sloped,below] {\small }     (rightc); 
		\draw[kernel1] (center) to
		node [sloped,below] {\small }     (root); 
		\draw[kernel1] (left) to
		node [sloped,below] {\small }     (root);
		\node at (-2,2) [fill=white,label={[label distance=0em]center: \scriptsize  $ X_i$} ] () {};
		\node at (2,2) [fill=white,label={[label distance=0em]center: \scriptsize  $\Xi_{\mathfrak{l}_1}  $} ] () {};
		\node at (0,7) [fill=white,label={[label distance=0em]center: \scriptsize  $ \Xi_{\mathfrak{l}_2} $} ] () {};
		\node at (0,2.5) [fill=white,label={[label distance=0em]center: \scriptsize  $ a $} ] () {};
	\end{tikzpicture} =   X_i \CI_a(\Xi_{\mathfrak{l}_2}) \Xi_{\mathfrak{l}_1}.
\end{equs}

We quotient these decorated trees by the following relations:
\begin{equation}  \label{relation_trees}
	\begin{aligned}
		X_i X_j & = X_j X_i, \quad   \I_a(\tau) \I_{b}(\sigma) =\I_{b}(\sigma) \I_a(\tau)
		\\ \I_a(\tau)X_i & = X_i \I_a(\tau) +  \I_{a-e_i}(\tau),
	\end{aligned}
\end{equation}
We denote by $ \CT $ the linear span of $ \mathfrak{T} $ quotiented by the relations described above.
Let us recall how such decorated trees are used for coding a Butcher series type expansion of the solution of \eqref{set01} within the framework of the theory of regularity structures. This expansion is local around a space-time point $ z $:
\begin{equs}
	U = \sum_{k} \Pi_z(X^k) \frac{u_{k}(z)}{k!} + \sum_{\CI_0(\tau) \in \CP\mathfrak{T}} \frac{\Pi_z(\CI_{0}(\tau))}{S(\tau)} F(\tau)({\bf{u}}(z)).
\end{equs}
where $ u_k =\partial_{x_0}^{k_0}\cdots \partial_{x_d}^{k_d} u $, $ \CP\mathfrak{T} $ are planted trees of the form $ \CI_0(\tau) $ with $ \tau $ a tree in $ \mathfrak{T} $ such that on each node the  $ X_i $ are located at the left-most location.
For a decorated tree
\begin{equs}
  \tau = X^k  \prod_{i=1}^n \CI_{a_i}(\tau_i) \Xi_{\mathfrak{l}},
\end{equs}
the symmetry factor, denoted by $ S(\tau) $, is defined inductively by
\begin{equs} \label{symmetry_factor}
  S(\tau) = (k!) \prod_{j=1}^{r} (m_j !) S(\tau_j)^{m_j},
\end{equs}
where $m_j$ are multiplicities of distinct $\CI_{a_j}(\tau_j)$ in $\prod_{i=1}^n \CI_{a_i}(\tau_i)$, and the elementary differentials 
 $ F(\tau) $ are defined inductively by
\begin{equation} \label{def_upsilon}
	\begin{aligned}
		F(\Xi_l) = a^{\mathfrak{l}}, \qquad F(\tau) = \Big\{\partial^k \partial_{u_{a_1}} ... \partial_{u_{a_n}} F (\Xi_{\mathfrak{l}})\Big\}\,\prod_{j=1}^n F(\tau_j).
\end{aligned} \end{equation}
where $ \partial^k = \partial_{x_0}^{k_0}\cdots \partial_{x_d}^{k_d} $ for $ k \in \mathbb{N}^{d+1}$.
The map $ \Pi_z $ is recursively defined by
\begin{equs}
	\label{def_Pi}
	\begin{aligned}
	(\Pi_z X^k)(z')  & = (z'-z)^k, \quad (\Pi_z 	\Xi_{\mathfrak{l}} )(z') = \xi_{\mathfrak{l}}(z'),
	\\ (\Pi_z \CI_{a}(\tau))(z')  & = (D^{a} K * \Pi_z \tau)(z') \\ & - \sum_{|k|_{\s} \leq \deg(\CI_{a}(\tau))} \frac{(z'-z)^k}{k!} (D^{a +k} K * \Pi_z \tau)(z)
	\end{aligned}
\end{equs}
and extended multiplicatively for the tree product. By tree product, we mean the merging root product $ \cdot $ given by:
\begin{equs}
	X^k	\prod_{i} \CI_{a_i}(\sigma_i) \cdot X^m	\prod_{j} \CI_{b_j}(\sigma_j) = X^{k+m} \prod_{i} \CI_{a_i}(\sigma_i) \	\prod_{j} \CI_{b_j}(\sigma_j)
\end{equs}
where now the products $ \prod_i $ and $ \prod_j $ are commutative. The sum over $k$ in \eqref{def_Pi} is truncated depending on the number $ \deg(\CI_{a}(\tau)) $ which is the degree of $ \CI_{a}(\tau)$. It is computed from the regularity of the noise, the Schauder estimates given by the convolution via the kernels $ D^a K $ and the monomials $ X^k $.  We start by defining  $ |k|_{\s} $ which is given by $ |k|_{\s} = \sum_{i=0}^d \s_i k_i $ where $ \s \in \mathbb{N}^{d+1} $ is a scaling ($(2,1,..,1)$ for parabolic equations as time counts double in comparison to spatial components).
Then, we suppose given $\deg(\Xi_{\mathfrak{l}})$ (meaning that $\xi_{\mathfrak{l}} \in \mathcal{C}^{\deg(\Xi_{\mathfrak{l}})}$, the space-time Hölder regularity of the noise $ \xi_{\mathfrak{l}} $) and $ \deg(\mathcal{I}) $ the Schauder estimate (meaning that $f \in \mathcal{C}^{\alpha} \mapsto K * f \in \mathcal{C}^{\alpha + \deg(\mathcal{I})}$) one has
\begin{equs}
	\deg(\tau \sigma) & = \deg(\tau) + \deg(\sigma),
	\\ 
	\deg(\mathcal{I}_a(\tau)) & = \deg(\mathcal{I}) - |a|_{\s} + \deg(\tau), \quad \deg(X^k) = |k|_{\s}.
\end{equs} 
	
 Let us mention that the map $ \Pi_z $ can be obtained from the map $ \Pi $ which are the iterated integrals without the recentering, also called pre-model:
\begin{equs} \label{def_Pi_z}
	(\Pi X^k)(z')  & = (z')^k, \quad (\Pi 	\Xi_{\mathfrak{l}} )(z') = \xi_{\mathfrak{l}}(z'),
	\\ (\Pi \CI_{a}(\tau))(z')  & = (D^{a} K * \Pi \tau)(z') 
\end{equs}
and extended multiplicatively for the tree product.
Then there exists a linear map $ F_z $ on decorated trees such that
\begin{equs}
	\Pi_z = \Pi \circ F_z.
\end{equs}
One can notice that by our convention, one can associate decorated trees that do have the $ X_i $ on the left-most location to recentered iterated integrals via the map $ \Pi_z $.
The other decorated trees that have a different order on the $ X_i $ could be seen as sum of these iterated integrals once they have been rewritten in the correct basis. For example, let us consider $ \tau $ given by:
\begin{equs} \label{example_1}
	\tau = \mathcal{I}_{b}(\Xi_{\mathfrak{l}}) X_i \Xi_{\mathfrak{l}}
\end{equs}
where $ \mathfrak{l} \in \Lab^- $.
Then, using the relation \eqref{relation_trees}, one has
\begin{equs}
	\tau = X_i \mathcal{I}_{b}(\Xi_{\mathfrak{l}})  \Xi_{\mathfrak{l}} + \mathcal{I}_{b-e_i}(\Xi_{\mathfrak{l}})  \Xi_{\mathfrak{l}}
\end{equs}
and by applying $ \Pi $, one gets
\begin{equs}
	(\Pi \tau)(z') & =  	(\Pi X_i \mathcal{I}_{b}(\Xi_{\mathfrak{l}}) \Xi_{\mathfrak{l}})(z')  + 	(\Pi \mathcal{I}_{b-e_i}(\Xi_{\mathfrak{l}}) \Xi_{\mathfrak{l}})(z') 
	\\ & = z'_i (D^{b} K * \xi_{\mathfrak{l}})(z') (\xi_{\mathfrak{l}})(z') + (D^{b-e_i} K * \xi_{\mathfrak{l}})(z') (\xi_{\mathfrak{l}})(z').
\end{equs}
At the level of the elementary differentials, the order at each node corresponds to the order of derivatives. Indeed, one can naturally extend the definition \eqref{def_upsilon} in order to accommodate these extra terms. For $ \tau =  (\prod_{i=1}^n A_i) \Xi_{\mathfrak{l}} $, one has 
\begin{equs}
	\label{def_upsilon_general}
	\begin{aligned}
		F(\Xi_l) = a^{\mathfrak{l}}, \qquad F(\tau) = \Big\{\prod_{i=1}^n D_{A_i} F (\Xi_{\mathfrak{l}})\Big\}\,\prod_{j=1}^n F(\tau_j).
	\end{aligned}
\end{equs}
where if $ A_i = X_j $, then  $ D_{A_i} = \partial_j $ alse if $ A_i =  \CI_{a_i}(\tau_i) $ then $ D_{A_i} = \partial_{u_{a_i}} $. Continuing the example of $ \tau $ given in \eqref{example_1}, one has
\begin{equs}
	F(\tau) & = F( X_i \mathcal{I}_{b}(\Xi_{\mathfrak{l}})  \Xi_{\mathfrak{l}}) + F(\mathcal{I}_{b-e_i}(\Xi_{\mathfrak{l}})  \Xi_{\mathfrak{l}})
	\\ & = a^{\mathfrak{l}}  \partial_i \partial_{u_b} a^{\mathfrak{l}} + a^{\mathfrak{l}}   \partial_{u_{b-e_i}} a^{\mathfrak{l}}
	\\ & = a^{\mathfrak{l}}   \partial_{u_b} \partial_i a^{\mathfrak{l}} 
\end{equs}
in the last line, we recognise the extension proposed in \eqref{def_upsilon_general}.

In \cite{BCCH}, the vector space $\CT$ was given a multi-pre-Lie algebra structure with the grafting products $\triangleright_\bn$, $\bn\in\mathbb{N}^{d+1}$. The following result is established in \cite{BCCH,BM22}.

\begin{theorem} \label{main_theorem_trees} The multi-pre-Lie algebra $ \CT $ is isomorphic to the free pre-Lie algebra generated by all elements  $ X^k \Xi_{\mathfrak{l}} $.
\end{theorem}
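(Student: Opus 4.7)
The plan is to mirror the proof of Theorem~\ref{second_main_theorem_multi_indices}, obtaining this statement as a direct consequence of Proposition~\ref{prop:extlin} applied to the class of multi-pre-Lie algebras. The key is to realise $\CT$ as the free $\mathfrak{g}$-extended multi-pre-Lie algebra generated by the set $\mfL^-$, where $\mathfrak{g}$ is the abelian Lie algebra of dimension $d+1$ spanned by $\partial_0,\ldots,\partial_d$ acting on $V=\mathrm{Vect}(\mathbb{N}^{d+1})$ by the standard rule $\partial_i\cdot\mathbf{n}=n_i(\mathbf{n}-e_i)$.

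First, I would equip $\CT$ with unary operators $\partial_i$, $0\le i\le d$, corresponding to multiplying a planar decorated tree on the left by $X_i$ at the root and then using the relations \eqref{relation_trees} to push the $X_i$ through. These operators pairwise commute by the first relation of \eqref{relation_trees}. The third relation $\I_a(\tau)X_i=X_i\I_a(\tau)+\I_{a-e_i}(\tau)$ translates, after extending by a Leibniz-type rule over the grafting, into the compatibility
\begin{equs}
\partial_i(\sigma\triangleright_{\mathbf{n}}\tau)=\partial_i(\sigma)\triangleright_{\mathbf{n}}\tau+\sigma\triangleright_{\mathbf{n}}\partial_i(\tau)+n_i\,\sigma\triangleright_{\mathbf{n}-e_i}\tau,
\end{equs}
which is exactly the extension relation of Definition~\ref{def_action_g} for $f_v=\triangleright_{\mathbf{n}}$ and the prescribed action of $\mathfrak{g}$ on $V$. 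By Proposition~\ref{linearised_Novikov}, linearising the multi-pre-Lie structure introduces no new identities, so $\CT$ becomes a $\mathfrak{g}$-extended (linearised) multi-pre-Lie algebra.

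Next, I would verify that $\CT$ with this structure is freely generated, as a $\mathfrak{g}$-extended multi-pre-Lie algebra, by the noise edges $\Xi_{\mathfrak{l}}$, $\mathfrak{l}\in\mfL^-$. Generation follows by induction on tree size: the grafting products build internal edges, while iterated applications of the $\partial_i$ to subtrees produce the required $X^k$-decorations, which thanks to \eqref{relation_trees} can be freely moved to the leftmost position at each node. For freeness, I would compare with the classical description of \cite{ChaLiv,BCCH}, in which the vector space of decorated trees with node decorations in $\mathbb{N}^{d+1}\times\mfL^-$ and edge decorations in $\mathbb{N}^{d+1}$ (without identifications between $X_i$-factors and $\CI_a$-grafts) is the free multi-pre-Lie algebra on the symbols $X^k\Xi_{\mathfrak{l}}$; the relations \eqref{relation_trees} then have to cut out of this free object precisely the quotient realised by the $\mathfrak{g}$-extension relations via Proposition~\ref{prop:extlin}.

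Finally, applying Proposition~\ref{prop:extlin} with $\cP_A=\PL$, $A=\mathbb{N}^{d+1}$, the abelian Lie algebra $\mathfrak{g}$ of dimension $d+1$, and $W=\mathrm{Vect}(\mfL^-)$, one concludes that as a multi-pre-Lie algebra $\CT$ is isomorphic to the free multi-pre-Lie algebra generated by $U(\mathfrak{g})\otimes W$. Since $\mathfrak{g}$ is abelian, $U(\mathfrak{g})$ is the polynomial algebra in $\partial_0,\ldots,\partial_d$, with combinatorial basis $\mathbb{N}^{d+1}$, so $U(\mathfrak{g})\otimes W$ has basis $\mathbb{N}^{d+1}\times\mfL^-$, which under the bijection $(k,\mathfrak{l})\mapsto X^k\Xi_{\mathfrak{l}}$ corresponds exactly to the generating set in the statement. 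The main obstacle in this approach is the middle step: rigorously matching the planar-tree relations \eqref{relation_trees} with the $\mathfrak{g}$-extension relations, which amounts to showing that the convention of placing $X_i$-factors at the leftmost location at every node yields a well-defined linear basis of $\CT$ and that the rewriting rules coming from \eqref{relation_trees} produce no hidden identities beyond those prescribed by the $\mathfrak{g}$-action on the grafting operations.
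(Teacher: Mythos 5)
Your proposal takes essentially the same route as the paper, which likewise deduces Theorem~\ref{main_theorem_trees} from Proposition~\ref{prop:extlin} by exhibiting $\CT$ as the free $\mathfrak{g}$-extended multi-pre-Lie algebra on $\mfL^-$ for the abelian Lie algebra $\mathfrak{g}$ spanned by the $\partial_i$ (the paper also records that the statement is already established in \cite{BCCH,BM22}). The only discrepancy is your normalisation: with the raw relation $\I_a(\tau)X_i=X_i\I_a(\tau)+\I_{a-e_i}(\tau)$ the commutator coefficient is $1$ rather than $n_i$, which is why the paper sets $\partial_i=-X_i$ and rescales each edge labelled $a$ by $1/a!$; either convention gives a valid $\mathfrak{g}$-action on $\mathrm{Vect}(\mathbb{N}^{d+1})$, so the conclusion that $U(\mathfrak{g})\otimes W$ has basis $\mathbb{N}^{d+1}\times\mfL^-$ is unaffected.
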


Note that the last of Relations \eqref{relation_trees} can be rewritten as
 \[
(-X_i) \frac{\I_a(\tau)}{a!}=\frac{\I_a(\tau)}{a!}(-X_i) + a_i\frac{\I_{a-e_i}(\tau)}{(a-e_i)!},
 \]
which easily implies that, if we let $\partial_i=-X_i$ and create combinatorial factor $\frac{1}{a!}$ for each edge labelled $a$ of each rooted tree, the vector space $\CT$ is the free extended multi-pre-Lie algebra. Using Proposition \ref{prop:extlin}, we immediately obtain another proof of Theorem \ref{main_theorem_trees} that is completely parallel to that of Theorem \ref{second_main_theorem_multi_indices}.

Let us explore this observation a bit further. Since the multi-Novikov identities imply the multi-pre-Lie identities, the free multi-Novikov algebra $\Nov_A\langle X\rangle$ can also be constructed as a quotient of the free multi-pre-Lie algebra $\PL_A\langle X\rangle$. Important elements of free multi-pre-Lie algebras are the following ``multi-symmetric braces'' that generalise the usual symmetric braces \cite{LM}.

\begin{definition} \label{multi_braces}
Let $a_1,\ldots,a_n\in A$. For elements $x,y_1,\ldots,y_n$ of a multi-pre-Lie algebra $V$, their \emph{multi-symmetric brace} 
 \[
\{x;y_1,\ldots,y_n\}_{a_1,\ldots,a_n}
 \] 
is defined by the recursive rule $\{x;y\}_a=x\triangleright_a y$ and
\begin{multline*}
\{x;y_1,\ldots,y_{n+1}\}_{a_1,\ldots,a_{n+1}}=\\ 
y_{n+1}\triangleright_{a_{n+1}}\{x;y_1,\ldots,y_{n}\}_{a_1,\ldots,a_{n}} -\\ \sum_{i=1}^n
\{x;y_1,\ldots,y_{n+1}\triangleright_{a_{n+1}}y_i,\ldots,y_{n}\}_{a_1,\ldots,a_{n}}.
\end{multline*}
\end{definition}

It is shown in \cite{BCCH} that, similarly to the classical construction of Chapoton and Livernet \cite{ChaLiv}, one can construct the algebra $\PL_A\langle X\rangle$ as the vector space with the basis $\RT_A\langle X\rangle$, the set of all rooted trees whose vertices are labelled by elements of $X$ and whose edges are labelled by elements of $A$. The key building blocks of which rooted trees are constructed are \emph{corollas}, trees with only one internal vertex and several leaves. One can prove by a simple inductive argument that whenever $x,y_1\ldots,y_n\in X$, the multi-symmetric brace $\{x;y_1,\ldots,y_n\}_{a_1,\ldots,a_n}$ is the corolla with $n$ leaves whose root vertex is labelled $x$, whose leaves are labelled $y_1,\ldots,y_n$, and whose edge incident to the leaf labelled $y_i$ is labelled $a_i$. 

It turns out that, for the canonical multi-Novikov structure on the free commutative multidifferential algebra (and hence for  the multidifferential realisation of the free Novikov algebra), the multi-symmetric braces are given by a particularly simple formula, proved by a simple inductive argument.
\begin{proposition} \label{brace_Novikov}
For the canonical multi-Novikov structure on $\CDi_A\langle X\rangle$, we have 
 \[
\{x;y_1,\ldots,y_{n+1}\}_{a_1,\ldots,a_{n+1}}=\partial_{a_1}\cdots\partial_{a_n+1}(x)y_1\cdots y_{n+1}.
 \]
\end{proposition}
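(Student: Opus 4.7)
The plan is a short induction on the number of leaves $n+1$, with the inductive step amounting to a single application of the Leibniz rule for the derivation $\partial_{a_{n+1}}$.

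The most natural base case is the empty brace $\{x\}=x$, which matches the formula (empty products of derivations and of leaves). Running the recursion once then gives $\{x;y_1\}_{a_1}=y_1\triangleright_{a_1}x=\partial_{a_1}(x)\,y_1$ in the canonical structure $u\triangleright_a v=u\,\partial_a(v)$, after invoking commutativity of $\CDi_A\langle X\rangle$. This is the reading consistent with the ``root labelled $x$'' interpretation of the corolla underlying the symmetric brace, and it reproduces the claimed formula at $n+1=1$.

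For the inductive step, I substitute the induction hypothesis
\[
\{x;y_1,\ldots,y_n\}_{a_1,\ldots,a_n}=\partial_{a_1}\cdots\partial_{a_n}(x)\,y_1\cdots y_n
\]
into the recursive expression for $\{x;y_1,\ldots,y_{n+1}\}_{a_1,\ldots,a_{n+1}}$. The first summand becomes
\[
y_{n+1}\,\partial_{a_{n+1}}\!\bigl(\partial_{a_1}\cdots\partial_{a_n}(x)\,y_1\cdots y_n\bigr),
\]
and the Leibniz rule splits this as the desired term $\partial_{a_1}\cdots\partial_{a_{n+1}}(x)\,y_1\cdots y_{n+1}$ plus $n$ correction terms in which $\partial_{a_{n+1}}$ lands on some $y_i$ instead of on $x$. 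Applying the induction hypothesis to each subtracted summand, together with $y_{n+1}\triangleright_{a_{n+1}}y_i=y_{n+1}\,\partial_{a_{n+1}}(y_i)$ and the commutativity of $\CDi_A\langle X\rangle$, reproduces exactly these correction terms; they cancel pairwise and leave only the claimed monomial.

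No serious obstacle is anticipated: the entire argument is Leibniz-rule bookkeeping inside the commutative multidifferential algebra, and no feature of the multi-Novikov identities beyond the canonical realisation $u\triangleright_a v=u\,\partial_a(v)$ is invoked. The only point that deserves care is the reconciliation of the recursion's base case with the convention of Definition~\ref{multi_braces}, which follows from commutativity of the underlying algebra as explained above.
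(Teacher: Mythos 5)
Your induction is exactly the ``simple inductive argument'' the paper alludes to (it gives no further details), and the Leibniz-rule bookkeeping in the inductive step — the desired term plus $n$ correction terms cancelling against the subtracted braces — is carried out correctly, including the implicit point that the induction hypothesis must hold for arbitrary algebra elements so that it applies to $y_{n+1}\triangleright_{a_{n+1}}y_i$. Your reading of the one-leaf case as $\{x;y_1\}_{a_1}=y_1\triangleright_{a_1}x=\partial_{a_1}(x)\,y_1$ is the right call: it is the only reading consistent with the recursion, with the root-labelled-$x$ corolla interpretation, and with the stated formula (the explicit base case in Definition~\ref{multi_braces} has its arguments transposed relative to the canonical realisation $u\triangleright_a v=u\,\partial_a(v)$, and taking it literally would instead give $x\,\partial_{a_1}(y_1)$ and break the proposition).
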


Thus, the quotient morphism from the free multi-pre-Lie algebra to the free multi-Novikov algebra ``disassembles'' a tree by applying derivatives of incoming edges of each vertex to the label of that vertex. This recovers in a conceptual way the morphism from decorated trees to multi-indices of \cite[Remark 5.6]{BK23}.

\end{document}